\newtheorem{definition}{Definition}
\newtheorem{theorem}{Theorem}
\newtheorem{guess}{Conjecture}
\newtheorem{lemma}{Lemma}
\newtheorem{corollary}{Corollary}
\newtheorem{proposition}{Proposition}
\begin{document}

\setlength{\oddsidemargin}{-.5in}
\setlength{\evensidemargin}{-.5in}

\title{Knots in the Canonical Book Representation of Complete Graphs}
\author{Andrea Politano and Dana Rowland}
\date{June 16, 2011}

\maketitle

\abstract{
We examine $\widetilde{K}_n$, the canonical book representation of the complete graph $K_n$, and describe knots that can be obtained as cycles in this particular spatial embedding.  We prove that for each knotted Hamiltonian cycle $\alpha$ in $\widetilde{K}_n$, there are at least $2^k \binom{n+k}{k}$ Hamiltonian cycles that are ambient isotopic to $\alpha$ in $\widetilde{K}_{n+k}$.  We show that when $p$ and $q$ are relatively prime with $p<q$, the $(p,q)$ torus knot is a Hamiltonian cycle in $\widetilde{K}_{2p+q}$.

We also show that the canonical book representation of $K_n$ contains a Hamiltonian cycle that is a composite knot if and only if $n\geq 12$.  We prove that if $\alpha$ is a knotted cycle in the canonical book representation of $K_n$ and $\beta$ is a knotted cycle in the canonical book representation of $K_m$, then there is a Hamiltonian cycle in $K_{n+m+1}$ that is ambient isotopic to a composite knot $\alpha \# \beta.$   Finally, we list the number and type of all non-trivial knots that occur as cycles in the canonical book representation of $K_n$ for $n\leq 11$.
}

\section{Introduction}
In $K_n$, the \emph{complete graph} on $n$ vertices, every pair of distinct vertices is joined by an edge.  An embedding or spatial representation of $K_n$ is a particular way of joining the $n$ vertices in three-dimensional space. In \cite{CG}, Conway and Gordon proved that every spatial representation of $K_6$ contains at least one pair of linked triangles and every spatial representation of $K_7$ contains at least one knotted Hamiltonian cycle.  They included examples of embeddings of $K_6$ and $K_7$ that were minimally linked or knotted--their embedding of $K_6$ contained exactly one pair of linked triangles and their embedding of $K_7$ contained exactly one knotted Hamiltonian cycle.

In \cite{otsuki}, Otsuki introduced a family of spatial representations of $K_n$ that generalized these examples of Conway and Gordon. Otsuki's spatial representation of $K_n$ is an example of a book representation. Projections of book representations prevent complicated interactions between edges.  In particular,
\begin{itemize}
\item No edge crosses itself.
\item A pair of edges cross at most once.
\item If edge $e_1$ crosses over edge $e_2$ and edge $e_2$ crosses over edge $e_3$, then edge $e_1$ crosses over edge $e_3$.
\end{itemize}
Because book representations minimize the entanglement among the edges in a graph, they are good candidates for minimizing the linking and knotting in an embedding of a graph.

Otsuki called his family of embeddings the \emph{canonical book representations} of $K_n$, which in this paper we denote as $\widetilde{K}_n$.  He showed that any sub-collection of $m$ vertices of $\widetilde{K}_n$ induces a subgraph that is ambient isotopic to $\widetilde{K}_m$.  Note this implies that for $n \geq 6$, $\widetilde{K}_n$ contains exactly $\binom{n}{6}$ linked triangles, all of which are ambient isotopic to the Hopf link, and for $n \geq 7$, $\widetilde{K}_n$ contains exactly $\binom{n}{7}$ knotted 7-cycles, all of which are trefoil knots.  Since Conway and Gordon's theorem implies that any spatial embedding of $K_n$ contains at least $\binom{n}{6}$ linked triangles and at least $\binom{n}{7}$ non-trivially knotted 7-cycles, a canonical book representation is minimally linked and knotted in this sense.

In addition, Fleming and Mellor proved that a canonical book representation of $K_n$ attains $14\binom{n}{7}$ triangle-square links, and showed this is the minimum possible for any embedding of $K_n$ \cite{fleming}.  They also conjectured that for any graph $G$ there is some book representation that realizes the minimal number of non-trivial links possible in an embedding of $G$.

Similarly, the canonical book representation $\widetilde{K}_n$ is a candidate for the minimal number of knotted cycles in an embedding.

In this paper, we focus on which knots arise as knotted Hamiltonian cycles in the canonical book representation for $n>7$. In section \ref{Canonical Book Representation} we review the definitions of book representations and Otsuki's canonical book representation, and show how knotted cycles in $\widetilde{K}_n$ are related to knotted cycles in $\widetilde{K}_{n+1}$.  In section \ref{Torus Knots} we show that $\widetilde{K}_n$ contains a $(p,q)$ torus knot (or link) when $n\geq q+2p$.  In section \ref{Composite Knots} we examine composite knots in the canonical book representation, and in section \ref{Conclusion} we give a listing of all the knots that appear as cycles in $\widetilde{K}_n$ for $8\leq n \leq 11$ and we conjecture about the ways in which $\widetilde{K}_n$ may achieve the minimal possible knotting complexity.

\section{The Canonical Book Representation of $K_n$}
\label{Canonical Book Representation}
In this section, we review the right canonical book representation, as defined in \cite{otsuki}.  (In the right canonical book representation, the knotted 7-cycles are right-handed trefoil knots.  The left canonical book representation is the mirror image of the one presented here.)

\begin{definition}
A \emph{$k$-book} is a subset of $\mathbb{R}^3$ consisting of a line $L$ and distinct half-planes $S_1$,$S_2$,...,$S_k$ with boundary $L$.  The line $L$ forms the spine of the book and the half-planes $S_i$ form the pages, or sheets.  We denote a $k$-book by $B_k$.  Let $G$ be a graph, and let $f: G\rightarrow B_k \subset \mathbb{R}^3$ be a tame embedding of $G$.  We say that the spatial representation $f(G)$ is a $k$-book representation of $G$ if:
\begin{enumerate}
\item Each vertex of $f(G)$ is on the line $L$
\item Each edge of $f(G)$ is contained in exactly one sheet $S_i$.
\end{enumerate}
\end{definition}

If $\widetilde{G}$ is a $k$-book representation of $G$, then $\widetilde{G}$ can be deformed by an ambient isotopy so that the vertices lie on a circle $C$ and the edges are chords on $k$ internally disjoint topological disks, all of which have $C$ as their boundary.  For the remainder of this paper, we will treat the sheets $S_i$ for $1\leq i\leq k$ as topological disks.  In a projection of the embedding onto the plane containing $C$, we assume that the sheets are labeled so that sheet $S_i$ is ``above'' sheet $S_j$ if $i<j$.  A $k$-book embedding is determined, up to ambient isotopy, by specifying which edges are in which sheet.

The \emph{sheet-number} of a graph $G$ is the smallest possible $k$ for which $G$ has a $k$-book representation.

For $K_n$, the sheet-number is $\lceil{n/2}\rceil,$ the smallest integer greater than or equal to $n/2$; see \cite{Bernhart} or \cite{Kobayashi} for proofs.  Otsuki's right canonical book representation, $\widetilde{K}_n$,  provides an example of a minimal-sheet book embedding of $K_n$ \cite{otsuki}.

To describe $\widetilde{K}_n$, it suffices to list which sheet contains each edge.  Label the $n$ vertices with the integers 1 through $n$.

\textbf{Case 1: The number of vertices is even.}

When $n=2m$, there are $m$ sheets and each of the sheets $S_1$, $S_2$, ..., $S_m$ contains $2m-1$ edges.  Sheet $S_i$ contains the edges joining vertex $i$ to vertex $i+j$, for $1 \leq j \leq m$, and the edges joining vertex $i+m$ to $(i+m+j)$ mod $2m$, for $1 \leq j \leq m-1$.  See Figure \ref{even}.

\begin{figure}
\centering
\includegraphics{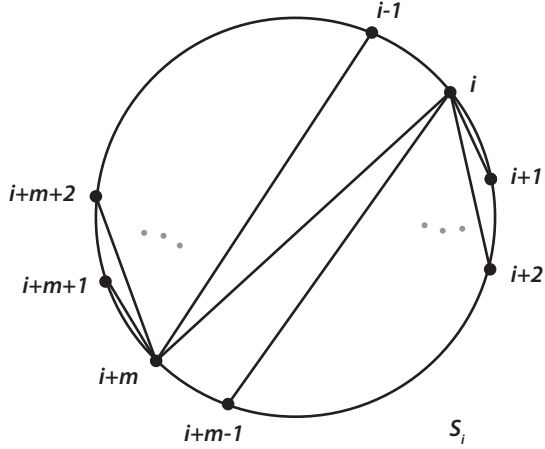}
\caption{Sheet $S_i$ in the canonical book representation of $K_{2m}$}
\label{even}
\end{figure}

Alternatively, if we are given an edge joining vertex $i$ to vertex $j$, we can determine which sheet the edge is in:

\begin{lemma}
Let $n=2m$ and let $(i,j)$ be the edge joining vertex $i$ to vertex $j$ in the projection of $\widetilde{K}_n$.  Assume that $i<j$.  Then we can determine which sheet contains $(i,j)$.
\begin{itemize}
\item   If $i \leq m$ and $j-i \leq m$, then $(i,j)$ is in $S_i$.
\item   If $i \leq m$ and $j-i \geq m+1$, then $(i,j)$ is in $S_{j-m}$.
\item	If $i \geq m+1$, then $(i,j)$ is in $S_{i-m}$.
\end{itemize}
Furthermore, suppose the edge $(i,j)$ crosses the edge $(k,l)$ in the projection.  We may assume without loss of generality that $1\leq i<k<j<l \leq 2m$.  Then edge $(k,l)$ is on top of edge $(i,j)$ if and only if $i \leq m$ and $k \geq m+1$.
\label{evenCriterion}
\end{lemma}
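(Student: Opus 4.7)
The first statement is essentially a rewriting of the definition of $\widetilde{K}_{2m}$, so my plan is to simply check each case against the enumeration of edges in each sheet given just above the lemma. If $i \le m$ and $j - i \le m$, then $(i,j)$ is one of the $m$ edges from vertex $i$ described by the first clause of the definition, hence lies in $S_i$. If $i \le m$ but $j - i \ge m+1$, then $j \ge m+1$, so I will look for $(i,j)$ as a wrap-around edge with $j$ playing the role of ``$i+m$'' in some sheet $S_k$: solving $k + m = j$ gives $k = j-m$, and I will verify that the wrap-around index $l = i - k = i - j + m$ (mod $2m$) lies in the valid range $1 \le l \le m-1$ from the hypothesis $j - i \ge m+1$. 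Finally, if $i \ge m+1$, then $i$ itself is the anchor of wrap-around edges in $S_{i-m}$, and the valid range $1 \le j - i \le m-1$ is forced automatically by $j \le 2m$.

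For the second statement, my plan is a case analysis on the location of $i$ and $k$ relative to $m$. There are three cases: (I) $i \le m$ and $k \le m$, (II) $i \le m$ and $k \ge m+1$, and (III) $i \ge m+1$ (forcing $k \ge m+1$ as well). Recall that ``on top'' means having a smaller sheet index. In Case II, both sheets can be read off from part one: the sheet of $(k,l)$ is $S_{k-m}$, and the sheet of $(i,j)$ is either $S_i$ (when $j - i \le m$) or $S_{j-m}$ (when $j - i \ge m+1$). I need $k - m < i$ in the first sub-case, which follows from $k < j \le i + m$, and $k - m < j - m$ in the second, which is immediate from $k < j$. So $(k,l)$ is indeed on top in Case II.

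In Case III, both sheets are determined by the third clause of part one, giving $S_{i-m}$ and $S_{k-m}$; since $k > i$, we get $k - m > i - m$, so $(k,l)$ sits below $(i,j)$. Case I is the fussiest: I split into four sub-cases based on whether $j - i$ and $l - k$ each exceed $m$ or not. The two ``diagonal'' sub-cases are immediate from $i < k$ or $j < l$. The mixed sub-case $j - i \ge m+1$, $l - k \le m$ is the one I expect to require the most care: here $(i,j) \in S_{j-m}$ and $(k,l) \in S_k$, and I must show $k > j - m$. The key observation is that $l \le k + m$ together with the crossing condition $l > j$ forces $j < k + m$, i.e., $k > j - m$, as needed. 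The remaining sub-case $j - i \le m$, $l - k \ge m+1$ is handled analogously, using $k > i$ and $l \ge k + m + 1$ to deduce that $S_{l-m}$ lies strictly below $S_i$.

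The main obstacle will be nothing conceptually deep, but simply the bookkeeping in Case I of the second statement: making sure the strict inequalities on sheet indices hold in every sub-case, and verifying that the case hypotheses are consistent with the crossing condition $i < k < j < l$ (and in particular that certain sub-cases like ``$i \le m$, $k \le m$, and $j - i \ge m+1$, $l - k \le m$'' actually admit examples so that the argument is not vacuous). Once organized as a small table in $(i \le m?, k \le m?, j - i \le m?, l - k \le m?)$, the verification reduces to direct comparisons of sheet indices given by part one.
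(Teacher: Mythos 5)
Your approach is the right one, and in fact it is the only proof available: the paper states this lemma with no proof at all, treating it as an immediate read-off from the description of which edges lie in sheet $S_i$ together with the convention that $S_i$ lies above $S_j$ when $i<j$; your direct verification of the three sheet assignments followed by the four-way comparison of sheet indices in the crossing cases is exactly that routine check, and your Case I/II/III analysis (with the strict inequalities $k-m<i$, $k<j$, $i<k$, $j<l$, $j-m<k$, $i<l-m$) correctly establishes both directions of the ``on top'' criterion. One small slip to fix when you write it up: in the second bullet of part one, the wrap-around index is not $l=i-k$. Sheet $S_{j-m}$ contains the edges joining vertex $(j-m)+m=j$ to $(j+l)\bmod 2m$ for $1\leq l\leq m-1$, so the relevant index is $l\equiv i-j \pmod{2m}$, i.e.\ $l=2m-(j-i)$; the hypothesis $m+1\leq j-i\leq 2m-1$ then gives exactly $1\leq l\leq m-1$. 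With your formula $i-k=i-j+m$ the range check you propose would fail (its representative mod $2m$ lands in $[m+1,2m-1]$), even though your conclusion that $(i,j)$ lies in $S_{j-m}$ is correct; the rest of the argument is sound as sketched.
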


\textbf{Case 2:  The number of vertices is odd.}

When $n=2m+1$, the sheets $S_1$, $S_2$, ..., $S_m$ each contain $2m$ edges and sheet $S_{m+1}$ is a ``half-sheet'' containing $m$ edges.  For each $1\leq i \leq m$, sheet $S_i$ contains the edges joining vertex $i$ to vertex $i+j$, and the edges joining vertex $i+m+1$ to $(i+m+j+1)$ mod $(2m+1)$, for $1 \leq j \leq m$.  Sheet $S_{m+1}$ contains the edges joining vertex $m+1$ to vertex $m+1+j$, for $1 \leq j \leq m$.  See Figure \ref{odd}.

\begin{figure}
\centering
\includegraphics{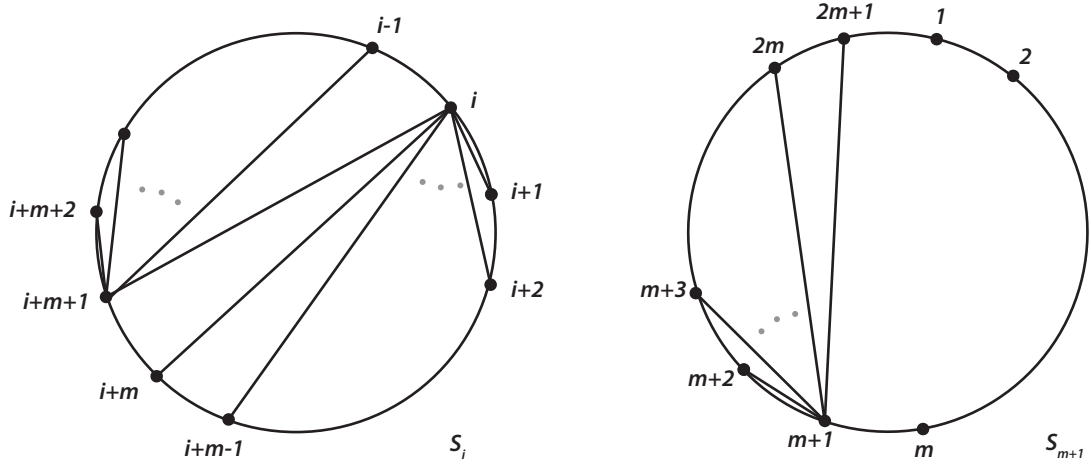}
\caption{Sheets $S_i$ ($i\leq m$) and $S_{m+1}$ in the canonical book representation of $K_{2m+1}$}
\label{odd}
\end{figure}

If we are given an edge joining vertex $i$ to vertex $j$, we can determine which sheet the edge is in:

\begin{lemma}
Let $n=2m+1$, and let $(i,j)$ be the edge joining vertex $i$ to vertex $j$ in the projection of $\widetilde{K}_n$.  Assume that $i<j$.  Then we can determine which sheet contains $(i,j)$.
\begin{itemize}
\item   If $i \leq m+1$ and $j-i \leq m+1$, then the edge is in $S_i$.
\item   If $i \leq m+1$ and $j-i \geq m+2$, then the edge is in $S_{j-m-1}$.
\item	If $i \geq m+2$, then the edge is in $S_{i-m-1}$.
\end{itemize}
Furthermore, suppose the edge $(i,j)$ crosses the edge $(k,l)$ in the projection.  We may assume without loss of generality that $1\leq i<k<j<l \leq 2m+1$.  Then edge $(k,l)$ is on top of edge $(i,j)$ if and only if $i \leq m+1$ and $k \geq m+2$.
\label{oddCriterion}
\end{lemma}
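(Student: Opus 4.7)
The plan is to prove both halves directly from the definition of $\widetilde{K}_{2m+1}$ recalled just before the lemma, in close parallel to the even-case argument (Lemma \ref{evenCriterion}).

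For the sheet-assignment part, I would first restate the definition cleanly: for $1 \leq a \leq m$, sheet $S_a$ decomposes into a \emph{first block} consisting of the $m$ edges $(a,a+k)$ with $1 \leq k \leq m$, and a \emph{second block} consisting of the $m$ edges from vertex $a+m+1$ to each of the vertices $a+m+2, a+m+3, \ldots, 2m+1, 1, 2, \ldots, a$ (indices mod $2m+1$). The half-sheet $S_{m+1}$ is simply the $m$ edges $(m+1, m+1+k)$ with $1 \leq k \leq m$. Then each edge $(i,j)$ of $\widetilde{K}_{2m+1}$ with $i<j$ lies in exactly one block, and reading off which gives the three bullets: the first bullet captures the first block of $S_i$ together with the single wrap-around edge $(i, i+m+1)$ sitting in the second block of $S_i$; the second bullet identifies $(i,j)$ as a wrap-around edge in the second block of $S_{j-m-1}$, where $j$ plays the role of $a+m+1$; the third bullet identifies $(i,j)$ as a non-wrap edge in the second block of $S_{i-m-1}$.

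For the crossing part, I would use that $S_a$ lies above $S_b$ iff $a<b$, so the claim is equivalent to showing that the sheet index of $(k,l)$ is strictly less than that of $(i,j)$ exactly when $i \leq m+1$ and $k \geq m+2$. The forward direction is immediate from the first part: under the hypothesis, $(k,l)\in S_{k-m-1}$, while $(i,j)\in S_i$ (if $j-i \leq m+1$) or $(i,j)\in S_{j-m-1}$ (if $j-i \geq m+2$). In the first sub-case $k < j \leq i+m+1$ gives $k-m-1 \leq i-1 < i$; in the second, $k<j$ gives $k-m-1 < j-m-1$. The reverse direction splits into $i \geq m+2$, in which case both edges lie in shifted sheets $S_{i-m-1}$ and $S_{k-m-1}$ and $i<k$ decides it, or $k \leq m+1$, which gives four sub-cases from whether each of $(i,j)$ and $(k,l)$ wraps past the $m+1$ threshold; each reduces to a one-line inequality among $i<k<j<l$ (for example, if $j-i \leq m+1$ and $l-k \geq m+2$, the sheets are $S_i$ and $S_{l-m-1}$, and $l \geq k+m+2 > i+m+2$ gives $i < l-m-1$).

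The only real obstacle is bookkeeping the wrap-around boundary case $j-i = m+1$ in the second block of $S_i$ and the four wrap-combinations in the reverse direction; no new idea is needed beyond those already present in the proof of Lemma \ref{evenCriterion}.
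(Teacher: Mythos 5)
Your proposal is correct, and it is exactly the argument the paper has in mind: the paper states this lemma without a written proof, treating it as a direct reading of Otsuki's assignment of edges to sheets (Figure \ref{odd}) together with the convention that $S_a$ lies above $S_b$ when $a<b$. Your block decomposition of each sheet and the short case check of sheet indices for the crossing claim (including the boundary edge $(i,i+m+1)$ and the four wrap combinations) fill in precisely those omitted verifications, with no deviation in method.
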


Otsuki proved that the canonical book representation has the property that any subgraph induced by a sub-collection of vertices is ambient isotopic to a canonical book representation \cite{otsuki}.  In particular, we have the following:
\begin{proposition}
Let $\alpha=(\alpha_1, \alpha_2, ..., \alpha_n)$ be an $n$-cycle through the vertices $1, ... ,n$ in $\widetilde{K}_{N}$.  Then the $n$-cycle $(\alpha_1, \alpha_2, ..., \alpha_n)$ through the vertices $1, ... ,n$ in $\widetilde{K}_{N+1}$ is ambient isotopic to $\alpha$.
\label{addVertices}
\end{proposition}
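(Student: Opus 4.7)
The plan is to derive the proposition immediately from Otsuki's theorem on induced subgraphs quoted just above. That theorem asserts that for any subset $V'$ of the vertices of $\widetilde{K}_M$, the subgraph of $\widetilde{K}_M$ induced by $V'$ is ambient isotopic, as a spatial graph, to the canonical book representation $\widetilde{K}_{|V'|}$. The proposition is simply the ``delete one vertex'' instance of this.

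Concretely, I would apply Otsuki's theorem to $\widetilde{K}_{N+1}$ with $V' = \{1, 2, \ldots, N\}$, so $|V'| = N$. This yields an ambient isotopy $\Phi$ of $\mathbb{R}^3$ taking the induced subgraph of $\widetilde{K}_{N+1}$ on $V'$ onto $\widetilde{K}_N$. Because $V'$ consists of the first $N$ vertex labels in their natural order, no relabeling is required: the identification of $\widetilde{K}_{|V'|}$ with $\widetilde{K}_N$ sends vertex $i$ to vertex $i$ for each $1 \leq i \leq N$. Since the $n$-cycle $(\alpha_1, \ldots, \alpha_n)$ in $\widetilde{K}_{N+1}$ only traverses vertices in $\{1, \ldots, n\} \subseteq V'$, it lies inside this induced subgraph, and hence $\Phi$ carries it to the cycle $(\alpha_1, \ldots, \alpha_n)$ on the same vertices in $\widetilde{K}_N$, which by hypothesis is $\alpha$.

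The only real subtlety is this label-preservation step, since Otsuki's conclusion is initially only ``ambient isotopic to $\widetilde{K}_{|V'|}$'' up to the canonical relabeling of the vertices of $V'$ as $1, \ldots, |V'|$ in increasing order; here that relabeling is the identity, so the issue evaporates. I do not foresee a genuine obstacle. If one wanted a proof that avoided invoking Otsuki's theorem as a black box, the alternative would be to use Lemmas \ref{evenCriterion} and \ref{oddCriterion} directly and verify, for every pair of crossing edges with endpoints in $\{1, \ldots, n\}$, that the over/under relation is the same in $\widetilde{K}_N$ as in $\widetilde{K}_{N+1}$ (splitting into the cases $N = 2m$ and $N = 2m+1$, since the sheet assignments shift between the two lemmas). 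This is where the main work would lie, and it is precisely why deferring to Otsuki's theorem is preferable.
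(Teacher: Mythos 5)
Your main route is correct, and it is genuinely different from the paper's. You derive the proposition as the delete-one-vertex instance of Otsuki's induced-subgraph theorem, and you correctly isolate the one point that matters: the ambient isotopy must realize the order-preserving vertex correspondence, which for $V'=\{1,\ldots,N\}$ is the identity on labels. Otsuki's result does hold in that labeled form (and the paper relies on that form implicitly elsewhere, e.g.\ when it names which $7$-cycle on vertices $8,\ldots,14$ of $\widetilde{K}_{14}$ is the trefoil), so restricting the isotopy to the cycle $(\alpha_1,\ldots,\alpha_n)$ gives the claim. The paper instead proves the proposition directly from Lemmas \ref{evenCriterion} and \ref{oddCriterion}: it compares the over/under data of every pair of crossing cycle edges in $\widetilde{K}_N$ and $\widetilde{K}_{N+1}$, finding no changes when $N=2m+1$, and, when $N=2m$, changes exactly for edges whose lower endpoint is $m+1$; those edges move from the top sheet of $\widetilde{K}_{2m}$ to the bottom half-sheet of $\widetilde{K}_{2m+1}$, and a bottom-sheet edge can be flipped above all others by an ambient isotopy, so the knot type is unchanged. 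What each approach buys: yours is shorter, but it leans on the labeled strengthening of a theorem the paper only quotes informally and as a black box; the paper's argument is self-contained, and its bottom-sheet flip observation is precisely the tool reused in Theorem \ref{extendCycle} and in the composite-knot constructions, so proving it here earns its keep.

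One caution about your parenthetical fallback: as stated it would fail for $N=2m$, since it is not true that every over/under relation is preserved in passing from $\widetilde{K}_{2m}$ to $\widetilde{K}_{2m+1}$; crossings involving an edge $(m+1,j)$ do change. A direct proof must, as the paper does, observe that these are exactly the edges landing in the bottom sheet, where the crossing changes are absorbed by an isotopy. Since this is only an aside in your write-up, it does not affect the validity of your primary argument.
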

\begin{proof}

Let $(i,j)$ and $(k,l)$ be edges of the cycle $\alpha$ in $\widetilde{K}_{N}$, labeled such that $i<j$, $k<l$, and $i<k$.  Two edges cross in the projection of $\widetilde{K}_{N}$ if and only if they cross in the projection of $\widetilde{K}_{N+1}$, which occurs if and only if $i<k<j<l$.

First, consider the case $N=2m+1$.  We can use Lemmas \ref{evenCriterion} and \ref{oddCriterion} to verify that:
\begin{enumerate}
\item If $i<k \leq m+1$ or if $m+2 \leq i <k$, then $(i,j)$ crosses over $(k,l)$ in both $\widetilde{K}_N$ and $\widetilde{K}_{N+1}$
\item If $i \leq m+1$ and $k \geq m+2$, then $(k,l)$ crosses over $(i,j)$ in both $\widetilde{K}_N$ and $\widetilde{K}_{N+1}$
\end{enumerate}
Since there are no crossing changes between edges, the cycle $(\alpha_1, \alpha_2, ..., \alpha_n)$ represents the same knot in both $\widetilde{K}_{2m+1}$ and $\widetilde{K}_{2m+2}$.

Now suppose that $N=2m$.
Using Lemmas \ref{evenCriterion} and \ref{oddCriterion} we observe that:
\begin{enumerate}
\item If $i<k \leq m$ or if $m+2 \leq i <k$, then $(i,j)$ crosses over $(k,l)$ in both $\widetilde{K}_N$ and $\widetilde{K}_{N+1}$
\item If $i \leq m$ and $k \geq m+2$, then $(k,l)$ crosses over $(i,j)$ in both $\widetilde{K}_N$ and $\widetilde{K}_{N+1}$
\item If $i=m+1$ or if $k=m+1$ then a crossing change occurs between edges $(i,j)$ and $(k,l)$ when moving from $\widetilde{K}_N$ to $\widetilde{K}_{N+1}$
\end{enumerate}

Notice that if $i=m+1$ (or $k=m+1$), then $(i,j)$ (or respectively $(k,l)$) is in the top sheet in $\widetilde{K}_N$ and the bottom sheet in $\widetilde{K}_{N+1}$.  An edge in the bottom sheet of $\widetilde{K}_{N+1}$ is under \emph{all} other edges and can be moved by an ambient isotopy so that it lies over all other edges.  Thus, the only crossing changes that occur do not change the knot type, and the cycle $(\alpha_1, \alpha_2, ..., \alpha_n)$ represents the same knot in both $\widetilde{K}_{2m}$ and $\widetilde{K}_{2m+1}$.

\end{proof}

We also know that if a Hamiltonian cycle with a certain knot type appears in $\widetilde{K}_n$, then $\widetilde{K}_N$ must contain a Hamiltonian cycle with the same knot type for any $N>n$.  The following theorem indicates one way to find such a cycle:

\begin{theorem}
Let $\alpha=(\alpha_1, \alpha_2, ..., \alpha_n)$ be an $n$-cycle through all the vertices $1, 2, ..., n+1$ except $i+1$ in $\widetilde{K}_{n+1}$.  Suppose that $\alpha_k=i$ and $\alpha_{k+1}=j$.
Then the Hamiltonian cycle $(\alpha_1, ..., \alpha_k, i+1, \alpha_{k+1}, ..., \alpha_n)$ is ambient isotopic to $\alpha$.
\label{extendCycle}
\end{theorem}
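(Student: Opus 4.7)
The cycles $\alpha$ (viewed in $\widetilde{K}_{n+1}$ as an $n$-cycle omitting vertex $i+1$) and $\alpha' = (\alpha_1, \ldots, \alpha_k, i+1, \alpha_{k+1}, \ldots, \alpha_n)$ share every edge except that the single edge $(i,j) \in \alpha$ is replaced by the two-edge path $(i, i+1) \cup (i+1, j)$ in $\alpha'$. Since $\alpha$ skips vertex $i+1$, no shared edge is incident to $i+1$, so the ``triangle'' with edges $(i,j)$, $(i, i+1)$, $(i+1, j)$ interacts with the rest of the cycle only through projection crossings. My plan is to show that this triangle admits an ambient isotopy in $\mathbb{R}^3$ realizing the substitution, following the local-analysis style of the proof of Proposition \ref{addVertices}.

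By the crossing criteria in Lemmas \ref{evenCriterion} and \ref{oddCriterion}, the edge $(i, i+1)$ has no crossings at all in the projection of $\widetilde{K}_{n+1}$, because no chord's endpoints can straddle the consecutive pair $i, i+1$. Moreover, for every edge $(k, l)$ with $k, l \neq i+1$, the chords $(i, j)$ and $(i+1, j)$ cross $(k, l)$ in the projection under equivalent combinatorial conditions (the presence of $i+1$ in the gap is irrelevant when $k, l \neq i+1$), so the set of projection crossings of $(i+1, j)$ with edges of $\alpha \cap \alpha'$ coincides exactly with the set of projection crossings of $(i, j)$ with those edges.

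Applying the over/under criteria in Lemmas \ref{evenCriterion} and \ref{oddCriterion} to each common crossing, the over/under assignment for $(i+1, j)$ matches that for $(i, j)$ \emph{except} in the boundary case $i = m$ (when $n+1 = 2m$) or $i = m+1$ (when $n+1 = 2m+1$). Away from this boundary case, the substitution is realized by the ambient isotopy that pushes vertex $i+1$ along the crossing-free edge $(i, i+1)$ to coincide with vertex $i$, dragging the chord $(i+1, j)$ into the position of $(i, j)$; the agreement of over/under signs means the deformation can be chosen to pass through no crossing change, so $\alpha$ and $\alpha'$ represent the same knot.

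In the boundary case, the edges $(i, j)$ and $(i, i+1)$ lie on the extreme bottom sheet (so under all other edges) while $(i+1, j)$ lies on the extreme top sheet (so over all other edges), and every common-crossing over/under is reversed. I resolve this by the same trick used at the end of the proof of Proposition \ref{addVertices}: any edge on the extreme bottom sheet can be ambient-isotoped above all other edges of $\widetilde{K}_{n+1}$. Lifting the bottom-sheet triangle edges in this way places the whole triangle in a simply connected, unobstructed region above the rest of $\alpha$'s edges, where the direct chord $(i, j)$ and the detour $(i, i+1, j)$ are trivially ambient isotopic as arcs with fixed endpoints $i$ and $j$. The main technical obstacle is checking that these ``lift-to-top'' moves can be performed simultaneously on the triangle's extreme edges without entangling with the remaining fixed edges of $\alpha$, which holds because bottom-sheet edges sweep freely above the entire book during the lift.
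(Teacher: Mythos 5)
Your overall strategy mirrors the paper's: replace the chord $(i,j)$ by the path $(i,i+1,j)$ through a local isotopy across the triangle on $i$, $i+1$, $j$, control crossings with Lemmas \ref{evenCriterion} and \ref{oddCriterion}, and handle the extreme top/bottom situation by the same lift used at the end of Proposition \ref{addVertices}. However, there are two genuine gaps. First, your claim that the projection crossings of $(i+1,j)$ with the edges common to both cycles coincide exactly with those of $(i,j)$ is false in general: the other edge of $\alpha$ at the vertex $i$, namely $(\alpha_{k-1},i)$, never crosses $(i,j)$ (shared endpoint), yet it does cross $(i+1,j)$ whenever $\alpha_{k-1}$ lies strictly between $i+1$ and $j$ on the circle; conversely, when $(i,j)$ and $(i+1,j)$ lie in adjacent (not equal) sheets, an edge in the same sheet as $(i+1,j)$ can cross $(i,j)$ without crossing $(i+1,j)$. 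So ``the deformation passes through no crossing change'' is not the right criterion -- crossings can legitimately appear or disappear near the endpoint $i$ during the slide. The statement that actually justifies the isotopy, and the one the paper proves, is that no edge of $\widetilde{K}_{n+1}$ passes through the disk spanned by the triangle $(i,i+1,j)$, which follows from showing that $(i,j)$ and $(i+1,j)$ always lie in the same sheet, in consecutive sheets, or in the bottom/top pair.

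Second, that consecutive-sheet property (equivalently, your assertion that over/under data agrees at common crossings away from the wrap case) is the entire computational content of the theorem, and you assert it rather than verify it: the paper's proof is precisely a six-case check of the sheet levels of $(i,j)$ versus $(i+1,j)$ using Lemmas \ref{evenCriterion} and \ref{oddCriterion}, carried out for $i<j$ with the orientation $j<i$ also needing treatment, which your write-up does not mention. Your identification of the exceptional wrap cases ($i=m$ when the vertex count is $2m$, $i=m+1$ when it is $2m+1$) is correct for the $i<j$ orientation, and your resolution of them by lifting the bottom-sheet edges over the book is the paper's move as well; but without the case-by-case verification, and with the inaccurate ``identical crossing sets'' premise standing in for the correct ``nothing pierces the triangle'' argument, the proof as written does not yet establish the theorem.
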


\begin{proof}
It suffices to check that in $\widetilde{K}_n$ any edge $(i,j)$ is at most one sheet apart from the edge $(i+1,j)$.  This will guarantee that the edge $(i,j)$ can be moved to the path $(i, i+1, j)$ by an ambient isotopy, since if the edge $(i+1,j)$ is one sheet level above or one below the edge $(i,j)$ then the path $(i,i+1,j)$ crosses the same edges as the edge $(i,j)$ and in the same manner.  In other words, no edge can pass through the triangle formed by the cycle $(i,i+1,j)$.  Note that the top and bottom sheets can also be considered consecutive, since an edge on the bottom sheet can be deformed by ambient isotopy to be on top of all the sheets, and vice versa.

We will verify that edges $(i,j)$ and $(i+1,j)$ are at most one sheet apart when $i<j$.  The proof for when $i>j$ is similar, and is left to the reader.  There are six cases to check.

\textbf{Case 1: $i\geq m+1$ and there are an even number of vertices.}
Refer to Lemma \ref{evenCriterion}.  The edge $(i,j)$ is in $S_{i-m}$. The  edge $(i+1,j)$ is in $S_{i+1-m}$. Therefore, the edges are in consecutive sheets.

\textbf{Case 2: $i\geq m+2$ and there are an odd number of vertices.}
Refer to Lemma \ref{oddCriterion}.  The edge $(i,j)$ is in $S_{i-m-1}$.  The edge $(i+1,j)$ is in $S_{i+1-m-1}$ which equals $S_{i-m}$. Therefore, the edges are in consecutive sheets.

\textbf{Case 3: $i\leq m$, $j-i\leq m$ and there are and even number of vertices.}
Refer to Lemma \ref{evenCriterion}.  The edge $(i,j)$ is in $S_i$. There are two possibilities for the sheet level of the edge $(i+1,j)$. First, if $i<m$, then $i+1\leq m$, and $j-(i+1) = j - i - 1\leq m-1 \leq m$. Therefore, edge $(i+1,j)$ is in $S_{i+1}$. Second, if $i=m$, then $i+1\geq m+1$ so edge $(i+1,j)$ is in $S_{i+1-m} = S_{m+1-m} = S_1$. This would not change the knot type because the edge $(i,j)$ was in the very last sheet and this edge is in the very first sheet. In both cases, the edges are in consecutive sheets.

\textbf{Case 4: $i\leq m+1$, $j-i\leq m+1$ and there are an odd number of vertices.}
Refer to Lemma \ref{oddCriterion}.  The edge $(i,j)$ is in $S_i$.  Again, there are two possibilities for the sheet level of edge $(i+1,j)$. First, if $i<m+1$, then $i+1\leq m+1$, and so $j-(i+1) = j-i-1\leq m\leq m+1$ meaning this edge is found in $S_{i+1}$. This is one sheet level below the original edge. Second, if $i=m+1$, then $i+1\geq m+2$. The edge $(i+1,j)$ is therefore in $S_{i+1-m-1}=S_{m+1+1-m-1}=S_1$, the very first sheet. As in case 3, this means that the knot type remains unchanged.

\textbf{Case 5: $i\leq m$, $j-i\geq m+1$ and there are an even number of vertices.}
Refer to Lemma \ref{evenCriterion}.  The edge $(i,j)$ is in $S_{j-m}$. Note that if $i=m$, then $j\geq 2m+1$ which is impossible, since there are only $2m$ vertices.  That leaves two possibilities for the sheet level of edge $(i+1,j)$. First, if $i<m$ and $j>i+m+1$, then $i+1\leq m$ and $j-(i+1)\geq m+1$. This forces the edge to be in $S_{j-m}$, and so both edges are in the same sheet. Second, if $i<m$ and $j=i+m+1$, then $i+1\leq m$ and $j-(i+1) = m$. This means the edge is in $S_{i+1}$, which is equivalent to $S_{j-m}$ because $j=i+m+1$. Again, both edges are in the same sheet.

\textbf{Case 6: $i\leq m+1$, $j-i\geq m+2$ and there are an odd number of vertices.}
Refer to Lemma \ref{oddCriterion}.  The edge $(i,j)$ is in $S_{j-m-1}$.  Note that if $i=m+1$, then $j \geq 2m+3$ which is impossible, so we can assume $i < m+1$.  There are two possibilities for the sheet level of edge $(i+1,j)$. First, if $i<m+1$ and $j>i+m+2$, then $i+1\leq m+1$ and $j-(i+1)\geq m+2$. This means the edge is in $S_{j-m-1}$. Second, if $i<m+1$ and $j=i+m+2$, then $i+1\leq m+1$ and $j-(i+1)= m+1$. Again, the edge is in $S_{i+1}=S_{j-m-1}$. Both edges are in the same sheet.
\end{proof}

\begin{corollary}
Suppose $\alpha$ is a Hamiltonian cycle in $\widetilde{K}_{n}$ with the property that no edge of $\alpha$ joins consecutively labeled vertices.  Let $N=n+k$ for $k\geq 0$.  Then $\widetilde{K}_N$ contains at least $2^k \binom{N}{k}$ Hamiltonian cycles that are ambient isotopic to $\alpha$.
\end{corollary}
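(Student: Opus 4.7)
The plan is to exhibit $2^k \binom{N}{k}$ Hamiltonian cycles in $\widetilde{K}_N$ ambient isotopic to $\alpha$, parameterized by a choice of $n$-element subset $T \subseteq \{1,\ldots,N\}$ (of which there are $\binom{N}{n} = \binom{N}{k}$) together with a sequence of $k$ binary insertion choices. For each such $T$, Otsuki's theorem produces an induced copy of $\widetilde{K}_n$ on $T$ via the order-preserving relabeling $t_i \mapsto i$, where $t_1 < \cdots < t_n$ enumerate $T$; hence the cycle $\alpha_T$ on $T$ obtained by pulling $\alpha$ through this relabeling is ambient isotopic to $\alpha$. I would then observe that the hypothesis transfers to $\alpha_T$: it has no edges of the form $(a,a+1)$ in the $\widetilde{K}_N$-labeling, since an edge $(t_i,t_j)$ with $t_j = t_i + 1$ would force $j = i+1$, ruled out in $\alpha$.

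\textbf{Insertion step.} For each $T$, list $S := \{1,\ldots,N\} \setminus T$ as $s_1 < \cdots < s_k$ and insert the $s_j$ into $\alpha_T$ one at a time in increasing label order by repeated application of Theorem \ref{extendCycle}. At the insertion of $s_j$, the predecessor vertex $s_j - 1$ lies in the current cycle---either because $s_j - 1 \in T$ from the start, or because $s_j - 1 = s_{j-1}$ has already been inserted---so its two incident edges give two valid insertion options, each producing an ambient-isotopic cycle. This yields $2^k$ cycles per subset $T$, and $2^k\binom{N}{k}$ cycles in total. The corner case $s_1 = 1$ (when $1 \in S$) can be handled by exploiting the cyclic label symmetry $v \mapsto v + 1 \pmod N$ of the canonical book representation, whose sheets permute under an ambient isotopy, reducing the insertion to the case already covered by Theorem \ref{extendCycle}.

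\textbf{Distinctness and main obstacle.} The hardest step is verifying that all $2^k \binom{N}{k}$ cycles so produced are pairwise distinct. This is where the no-consecutive-labels hypothesis is essential: every edge of the form $(a,a+1)$ in a constructed cycle must have been introduced by an insertion, since $\alpha_T$ contributes none. By reading off the pattern of consecutive-label edges in a given final cycle---grouped into maximal ``chains'' that can form when elements of $S$ include consecutive integers---one should be able to recover $S$ and the sequence of insertion choices uniquely. The trickiest bookkeeping occurs when $S$ contains runs of consecutive integers, because a later insertion can delete a consecutive-label edge created by an earlier one while simultaneously introducing a new one; confirming that the remaining data still encodes the full insertion history is the key technical step I anticipate having to grind through in detail.
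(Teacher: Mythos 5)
Your proposal is correct and follows essentially the same route as the paper: Otsuki's induced-subgraph property gives $\binom{N}{n}=\binom{N}{k}$ copies of the cycle on the possible vertex subsets, and the $k$ missing vertices are then inserted one at a time adjacent to their label-predecessors via the proof of Theorem \ref{extendCycle}, with two choices per insertion. The two points you single out as delicate---the wrap-around insertion when a missing vertex has no predecessor yet available, and the pairwise distinctness of the $2^k\binom{N}{k}$ resulting cycles---are treated even more tersely in the paper (it simply interprets $j+1$ as $1$ when $j=N$ and asserts distinctness outright), so your extra care there is a refinement of, not a departure from, the published argument.
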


\begin{proof}
The subgraph induced by any $n$ vertices of $\widetilde{K}_N$ is ambient isotopic to $\widetilde{K}_n$, so there are at least $\binom{N}{n}$ $n$-cycles in $\widetilde{K}_N$ that are ambient isotopic to $\alpha$.  These cycles share the property that no edge joins consecutive vertices.  Let $(\alpha_1, \alpha_2, ..., \alpha_n)$ be such an $n$-cycle.  Choose the smallest integer $j$ such that $j =\alpha_i$ for some $1\leq i \leq n$ but $j+1\neq \alpha_l$ for any $1\leq l\leq n$.  (Note:  if $j=N$, we interpret $j+1$ as 1.)  By the proof of Theorem \ref{extendCycle}, the cycles $(\alpha_1, ..., \alpha_{i-1}, j+1, \alpha_i, ..., \alpha_n)$ and $(\alpha_1, ..., \alpha_{i}, j+1, \alpha_{i+1}, ..., \alpha_n)$ are both ambient isotopic to $\alpha$.  Repeat this step until all vertices in $\widetilde{K}_N$ are used.  This gives $2^k$ ways to extend each $n$-cycle, which produces $2^k \binom{N}{k}$ distinct Hamiltonian cycles that are ambient isotopic to $\alpha$, as claimed.
\end{proof}

This immediately implies that there are at least $2^{N-7} \cdot \binom{N}{7}$ Hamiltonian cycles that are trefoil knots in $\widetilde{K}_N$ when $N \geq 7$. This bound is not sharp, however, as shown in the table in Section \ref{Conclusion}.

\section{Torus knots in the Canonical Book Embedding}
\label{Torus Knots}
Recall that a \emph{torus link} is a knot or link that can be embedded on the standard (unknotted) torus in $\mathbb R^3$.  A $(p,q)$ torus link can be deformed so that it crosses every meridian (a closed curve that bounds a topological disk that is ``inside'' the torus) of the torus $p$ times and every longitude (a closed curve that bounds a topological disk that is ``outside'' the torus) of the torus $q$ times.  When $p$ and $q$ are relatively prime, the link is a knot.  See \cite[Section 5.1]{adams} for a general description of $(p,q)$ torus knots and links.

A $(p,q)$ torus knot can also be described as the closure of a braid on $p$ strands, with braid word $(\sigma_1\sigma_2 \ldots \sigma_{p-1})^q$. Recall that $\sigma_i$ denotes that the $i$th strand of the braid crosses over the $i+1$st strand of the braid, and equivalent braid words can be obtained using the braid relations $\sigma_i\sigma_{i+1}\sigma_i=\sigma_{i+1}\sigma_i\sigma_{i+1}$ and $\sigma_i \sigma_j = \sigma_j \sigma_i$ when $|i-j|\geq 2$.   See \cite[Section 5.4]{adams} or \cite{BraidsRef_2} for references on braids.

Consider the Hamiltonian cycle $(1,3,5,...,2m+1,2,4,...,2m)$ in $\widetilde{K}_{2m+1}$.  This cycle forms the closure of a 2-strand braid with $2m+1$ crossings.  See Figure \ref{cycleK9}.  For each $i \leq 2m-2$, we know from Lemma \ref{oddCriterion} that the edge $(i,i+2)$ crosses over the edge $(i+1, i+3)$ except when $i=m+1$.  Edge $(2m-1,2m+1)$ crosses over edge $(2m,1)$, edge $(2m,1)$ crosses over edge $(2m+1,2)$, and edge $(2m+1,2)$ crosses under edge $(1,3)$.  The resulting braid word is $\sigma^{m}\sigma^{-1}\sigma^{(2m-2)-(m+1)}\sigma\sigma\sigma^{-1}=\sigma^{2m-3}$.   Therefore, we see that $\widetilde{K}_{2m+1}$ contains a $(2,2m-3)$ torus knot.  When $q$ is odd, $\widetilde{K}_n$ contains a $(2,q)$ torus knot as one of its Hamiltonian cycles for all $n \geq q+4$.  (Note:  When $q$ is even, the same argument shows that $\widetilde{K}_n$ contains a $(2,q)$ torus link.)

\begin{figure}
\centering
\includegraphics{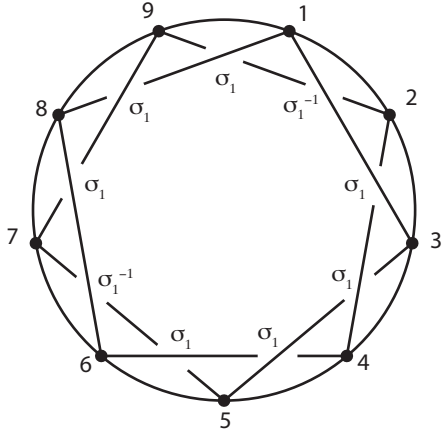}
\caption{The cycle $(1,3,5,7,9,2,4,6,8)$ in $\widetilde{K}_9$ is the knot $5_1$.  It can be described by the braid word $\sigma_1^4\sigma_1^{-1}\sigma_1^3\sigma_1^{-1}=\sigma_1^5$.}
\label{cycleK9}
\end{figure}

Suppose $n>6$ is not a multiple of 3, and consider the Hamiltonian cycle $(1,4,7,...)$ in $\widetilde{K}_n$.  This cycle forms the closure of a 3-strand braid with word $\prod_{i=1}^n \left( \sigma_1^{\delta_1(i)}\sigma_2^{\delta_2(i)} \right)$ where $\delta_1(i) = 1$ if the edge $(i,i+3)$ is over the edge $(i+1,i+4)$ and $-1$ otherwise, and $\delta_2(i) = 1$ if the edge $(i,i+3)$ is over the edge $(i+2,i+5)$, and $-1$ otherwise.  (The vertex labels are to be taken modulo $n$.)

Suppose $n=2m$ is even.  (The case for when $n$ is odd is similar, and omitted.)  Then Lemma \ref{evenCriterion} implies that $\delta_1(i) = -1$ if and only if $i$ is $m$ or $n$, and $\delta_2(i) = -1$ if and only if $i$ is one of $m-1, m, n-1$ or $n$.  The braid word becomes \[(\sigma_1\sigma_2)^{m-2}\sigma_1\sigma_2^{-1}\sigma_1^{-1}\sigma_2^{-1}(\sigma_1\sigma_2)^{m-2}\sigma_1\sigma_2^{-1}\sigma_1^{-1}\sigma_2^{-1}.\] Since the braid relations imply that \[\sigma_2^{-1}\sigma_1^{-1}\sigma_2^{-1}=\sigma_1^{-1}\sigma_2^{-1}\sigma_1^{-1}\]  we see that $\sigma_1 \sigma_2^{-1} \sigma_1^{-1}\sigma_2^{-1}\sigma_1\sigma_2$ is the identity.  Therefore the braid word can be reduced to $(\sigma_1\sigma_2)^{n-6}$.  This shows that $\widetilde{K}_{n}$ contains a $(3,n-6)$ torus knot.  For any $n \geq q+6$, the spatial representation $\widetilde{K}_{n}$ contains a $(3,q)$ torus knot (or link, if $q$ is a multiple of 3).

An extension of this argument leads to the following theorem:

\begin{theorem}
Let $p$, $q$, and $n$ be positive integers such that $p \leq q$ and $n\geq q+2p$.  Then the canonical book representation of $K_n$ contains a $(p,q)$ torus knot (or link).
\end{theorem}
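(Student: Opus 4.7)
The plan is to generalize the explicit arguments given for $p=2$ and $p=3$ earlier in this section. By iterating Proposition \ref{addVertices} it suffices to treat the base case $n=q+2p$, since any cycle realizing a $(p,q)$ torus knot or link in $\widetilde{K}_{q+2p}$ remains ambient isotopic to the same cycle when new vertices are added to form $\widetilde{K}_n$ for $n>q+2p$.

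For the base case, I would consider the cycle $C=(1,\,1+p,\,1+2p,\ldots)$ with vertex labels reduced modulo $n=q+2p$. Since $\gcd(p,q+2p)=\gcd(p,q)$, the curve $C$ is a Hamiltonian cycle exactly in the knot case and otherwise splits into $\gcd(p,q)$ components, matching the component count of a $(p,q)$ torus link. The projection of $C$ is naturally the closure of a $p$-strand braid whose strands group the chords by the residue mod $p$ of their starting vertex. Every chord has the same combinatorial length $p$, so each crosses exactly the $p-1$ chords immediately preceding or following it in the cyclic order of $C$, producing a braid word of length $n(p-1)$. Using Lemmas \ref{evenCriterion} and \ref{oddCriterion} I would classify each crossing: for chords $(v,v+p)$ whose left endpoint $v$ lies safely away from the middle vertex $m$ (respectively $m+1$) and away from the wrap-around near vertex $n$, all $p-1$ consecutive crossings are positive and contribute the clean factor $\sigma_1\sigma_2\cdots\sigma_{p-1}$ to the braid word. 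Negative crossings are confined to two short runs of consecutive chords, one centered at the middle of the vertex line and one at the wrap-around, and by the left-right symmetry of $\widetilde{K}_n$ both runs contribute the same local word $W\in B_p$.

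The crux of the argument is then the identity
\[ W\cdot(\sigma_1\sigma_2\cdots\sigma_{p-1})^{p-1} \;=\; 1 \]
in the braid group $B_p$, generalizing the identities $\sigma_1^{-1}\cdot\sigma_1=1$ (for $p=2$) and $\sigma_1\sigma_2^{-1}\sigma_1^{-1}\sigma_2^{-1}\cdot\sigma_1\sigma_2=1$ (for $p=3$) used in the preceding paragraphs. Granted this identity, each of the two bad runs cyclically absorbs $p-1$ consecutive factors of $\sigma_1\sigma_2\cdots\sigma_{p-1}$ from the adjacent clean stretch, so the total braid word reduces to $(\sigma_1\sigma_2\cdots\sigma_{p-1})^{n-2p}=(\sigma_1\sigma_2\cdots\sigma_{p-1})^{q}$, which is the standard braid presentation of the $(p,q)$ torus knot or link. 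This identifies the knot type of $C$ and completes the base case.

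The main obstacle is the combinatorial work of precisely identifying the word $W$ produced by the geometry and verifying the cancellation above using only the braid relations $\sigma_i\sigma_{i+1}\sigma_i=\sigma_{i+1}\sigma_i\sigma_{i+1}$ and $\sigma_i\sigma_j=\sigma_j\sigma_i$ for $|i-j|\geq 2$. I would attack this by induction on $p$: use far-commutation to pull the outermost generators of $W$ past an inner sub-braid, apply the inverted braid relation $\sigma_i\sigma_{i+1}^{-1}\sigma_i^{-1}=\sigma_{i+1}^{-1}\sigma_i^{-1}\sigma_{i+1}$ to cancel adjacent positive and negative generators, and reduce $W$ to a shorter expression that falls under the inductive hypothesis. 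The structural symmetry of the canonical book representation about its middle vertex should guarantee that $W$ has exactly the layered form this induction requires, but carrying out the bookkeeping cleanly is the main technical cost of the proof.
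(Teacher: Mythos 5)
Your overall strategy is exactly the paper's: reduce to $n=2p+q$, take the cycle of ``diagonal'' edges $(i,i+p)$, read it as the closure of a $p$-strand braid with one syllable $\sigma_1^{\pm1}\cdots\sigma_{p-1}^{\pm1}$ per chord, observe that the negative crossings occur in two symmetric runs of $p-1$ consecutive chords (one at the middle of the vertex line, one at the wrap-around), and cancel those runs against adjacent clean factors using the braid relations. However, your crux identity is quantitatively wrong, and the error propagates into your final count. The word $W$ contributed by a bad run is
\[
W=(\sigma_1\cdots\sigma_{p-2})\sigma_{p-1}^{-1}\,(\sigma_1\cdots\sigma_{p-3})\sigma_{p-2}^{-1}\sigma_{p-1}^{-1}\cdots\sigma_1^{-1}\sigma_2^{-1}\cdots\sigma_{p-1}^{-1},
\]
and the correct identity (the one the paper proves, illustrated there for $p=4$) is $(\sigma_1\cdots\sigma_{p-1})\,W=1$, i.e.\ $W=(\sigma_1\cdots\sigma_{p-1})^{-1}$: each bad run absorbs exactly \emph{one} clean factor, not $p-1$ of them. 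Your own $p=3$ example already says this ($\sigma_1\sigma_2^{-1}\sigma_1^{-1}\sigma_2^{-1}\cdot\sigma_1\sigma_2=1$ uses a single factor $\sigma_1\sigma_2$, not $(\sigma_1\sigma_2)^{2}$), and with your stated identity $W(\sigma_1\cdots\sigma_{p-1})^{p-1}=1$ the bookkeeping does not close: there are $n-2(p-1)$ clean chords, so absorbing $p-1$ clean factors per bad run would leave $(\sigma_1\cdots\sigma_{p-1})^{\,n-4p+4}$, which equals $(\sigma_1\cdots\sigma_{p-1})^{q}$ only when $p=2$. Indeed $W(\sigma_1\cdots\sigma_{p-1})^{p-1}=(\sigma_1\cdots\sigma_{p-1})^{p-2}\neq 1$ for $p\geq 3$, so the identity you propose to prove by induction is false.

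Beyond fixing the exponent, note that identifying $W$ explicitly from Lemma \ref{evenCriterion} (resp.\ \ref{oddCriterion}) and verifying $(\sigma_1\cdots\sigma_{p-1})W=1$ by braid relations is precisely the substance of the paper's proof; your proposal defers both as ``the main technical cost,'' so as written the argument is a plan organized around an incorrect cancellation, not a proof. One minor further point: for the reduction to $n=2p+q$ in the link case ($\gcd(p,q)>1$), Proposition \ref{addVertices} as stated concerns a single cycle; you should instead invoke Otsuki's result that induced subgraphs of $\widetilde{K}_n$ are ambient isotopic to canonical book representations (or Theorem \ref{extendCycle}, as the paper does, in the knot case).
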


\begin{proof}
By Theorem \ref{extendCycle}, it suffices to prove this theorem when $n=2p+q$.  Consider the knot or link in $\widetilde{K}_{2p+q}$ consisting of all edges of the form $(i, i+p)$ for $1 \leq i \leq n$, where the vertex labels are taken modulo $n$.  This knot or link can be described as a braid on $p$ strands with braid word
\[ w=\prod_{i=1}^n \left[ \sigma_1^{\delta_1(i)}\sigma_2^{\delta_2(i)}...\sigma_{p-1}^{\delta_{p-1}(i)} \right]  \]
where
\[ \delta_j(i) =  \left\{ \begin{array}{rl}
                         1 & \mathrm{if\ edge\ } (i,i+p) \mathrm{\ is\ over\ edge\ } (i+j,i+j+p) \\
                         -1 & \mathrm{otherwise}
                       \end{array}
  \right.
\]

We will use Lemma \ref{evenCriterion} to prove the case when $n$ is even.  The case for $n$ odd is left to the reader.

Suppose $n=2m$.  By Lemma \ref{evenCriterion},
\[
\delta_j(i) = \left\{ \begin{array}{rl}
                          1 & \mathrm{if\ }1 \leq i \leq m-j \\
                         -1 & \mathrm{if\ }m-j+1 \leq i \leq m \\
                          1 & \mathrm{if\ }m+1 \leq i \leq n-j \\
                         -1 & \mathrm{if\ }n-j+1 \leq i \leq n
                       \end{array}
                        \right.
\]
and this allows us to express $w$ as
\[ w=\left[
    (\sigma_1\sigma_2\ldots\sigma_{p-1})^{m-p+1}(\sigma_1\sigma_2\ldots\sigma_{p-2})\sigma_{p-1}^{-1}(\sigma_1\sigma_2\ldots\sigma_{p-3})\sigma_{p-2}^{-1}\sigma_{p-1}^{-1}\cdots\sigma_1^{-1}\sigma_2^{-1}\cdots\sigma_{p-1}^{-1}
\right]^2  .\]

Next, observe that \[(\sigma_1\sigma_2\ldots\sigma_{p-1})(\sigma_1\sigma_2\ldots\sigma_{p-2})\sigma_{p-1}^{-1}(\sigma_1\sigma_2\ldots\sigma_{p-3})\sigma_{p-2}^{-1}\sigma_{p-1}^{-1}\cdots\sigma_1^{-1}\sigma_2^{-1}\cdots\sigma_{p-1}^{-1} \]
is equivalent to the identity.   For example, when $p=4$, we can use the braid relations to obtain:

\begin{eqnarray*}
(\sigma_1\sigma_2\sigma_3)(\sigma_1\sigma_2\sigma_3^{-1})(\sigma_1\sigma_2^{-1}\sigma_3^{-1})(\sigma_1^{-1}\sigma_2^{-1}\sigma_3^{-1}) & = & \\
(\sigma_1\sigma_2\sigma_1)(\sigma_3\sigma_2\sigma_3^{-1})(\sigma_1\sigma_2^{-1}\sigma_1^{-1})(\sigma_3^{-1}\sigma_2^{-1}\sigma_3^{-1}) & = & \\
(\sigma_2\sigma_1\sigma_2)(\sigma_2^{-1}\sigma_3\sigma_2)(\sigma_2^{-1}\sigma_1^{-1}\sigma_2)(\sigma_2^{-1}\sigma_3^{-1}\sigma_2^{-1}) & = & \\
\sigma_2\sigma_1\sigma_3\sigma_1^{-1}\sigma_3^{-1}\sigma_2^{-1} & = & \\
\sigma_2\sigma_3\sigma_1\sigma_1^{-1}\sigma_3^{-1}\sigma_2^{-1} & = & 1
\end{eqnarray*}
See Figure \ref{braid}. 

\begin{figure}
\centering
\includegraphics{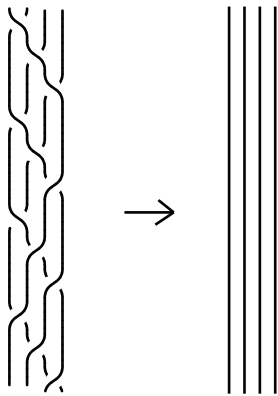}
\caption{
The braid word $(\sigma_1\sigma_2\sigma_3)(\sigma_1\sigma_2\sigma_3^{-1})(\sigma_1\sigma_2^{-1}\sigma_3^{-1})(\sigma_1^{-1}\sigma_2^{-1}\sigma_3^{-1})$ is equivalent to the identity.
}
\label{braid}
\end{figure}

This implies that the braid word simplifies to $w=(\sigma_1\sigma_2\ldots\sigma_{p-1})^{2m-2p}=(\sigma_1\sigma_2\ldots\sigma_{p-1})^q$, so we obtain a $(p,q)$ torus link as claimed.

\end{proof}

\section{Composite knots in the canonical book embedding}
\label{Composite Knots}
In this section we prove that the canonical book embedding of $K_n$ contains a composite knot for all $n\geq 12$. We also show that if we choose any two knotted Hamiltonian cycles contained in $\widetilde{K}_p$ and $\widetilde{K}_q$ respectively, their composite will be a Hamiltonian cycle in $\widetilde{K}_{p+q+1}$.

\begin{theorem}
Let $n \geq 14$.  Then the cycle
\[(1,3,5,7,9,11,13,8,10,12,14,15,16,...,n,2,4,6)\]
in the canonical book representation of $K_n$ is the composite of two trefoils.
\label{composite}
\end{theorem}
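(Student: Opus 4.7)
My plan is to reduce the statement to the base case $n = 14$ using Theorem \ref{extendCycle}, and then exhibit a separating 2-sphere that realizes the $n=14$ cycle as a connected sum of two trefoils.

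\textbf{Reduction.} For $n > 14$, the cycle contains the consecutive segment $(n-1, n, 2)$. Reading Theorem \ref{extendCycle} in reverse, deleting vertex $n = (n-1)+1$ yields an ambient isotopic $(n-1)$-cycle on vertices $\{1,\ldots,n-1\}$ in $\widetilde{K}_n$; Proposition \ref{addVertices} then identifies this with the corresponding cycle in $\widetilde{K}_{n-1}$. Iterating reduces the proof to showing that $C := (1, 3, 5, 7, 9, 11, 13, 8, 10, 12, 14, 2, 4, 6)$ in $\widetilde{K}_{14}$ is a composite of two trefoils.

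\textbf{Separating sphere.} Place $\widetilde{K}_{14}$ with its spine along a line in $\mathbb{R}^3$, vertices in ascending order, and let $S$ be a 2-sphere separating $\{1,\ldots,7\}$ from $\{8,\ldots,14\}$. Among the fourteen edges of $C$, only $(7,9)$ and $(14,2)$ have one endpoint in each vertex set, so $S$ meets $C$ transversely in exactly two points. This decomposes $C$ into an arc $\alpha_1 \subset B_1$ carrying the path $2 \to 4 \to 6 \to 1 \to 3 \to 5 \to 7$ and an arc $\alpha_2 \subset B_2$ carrying $9 \to 11 \to 13 \to 8 \to 10 \to 12 \to 14$ (each with two half-edges extending to $S$). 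By the standard fact that a sphere meeting a knot transversely in two points realizes a connect-sum decomposition, $C = K_1 \# K_2$, where each $K_i$ is obtained by closing $\alpha_i$ with a cap arc on $S$.

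\textbf{Identifying each summand.} For $K_1$, I choose the cap arc to be a small pushoff in $B_1$ of the concatenation: the half-edge of $(7,9)$ from $S$ to vertex $7$, the edge $(2,7)$ of the induced $\widetilde{K}_7$ on $\{1,\ldots,7\}$, and the half-edge of $(14,2)$ from vertex $2$ back to $S$. Contracting the two bigons formed along the pushed-off half-edges identifies $K_1$ with the cycle $(1,3,5,7,2,4,6)$ in the induced $\widetilde{K}_7$; by Otsuki's result that any 7-vertex subgraph of $\widetilde{K}_{14}$ is ambient isotopic to $\widetilde{K}_7$, this is the right-handed trefoil. The symmetric argument in $B_2$, using the edge $(9,14)$ of the induced $\widetilde{K}_7$ on $\{8,\ldots,14\}$, identifies $K_2$ with the cycle $(8,10,12,14,9,11,13)$, which under the relabeling $v \mapsto v - 7$ becomes $(1,3,5,7,2,4,6)$ and is also the trefoil. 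Hence $C = K_1 \# K_2$ is a composite of two trefoils.

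The main obstacle is the bigon contraction in the last step: one must verify that the cap arcs can be chosen so that each bigon bounds a disk in $B_i$ whose interior is disjoint from $\alpha_i$. Because each bigon is localized near a single vertex (namely $7$ and $2$ in $B_1$, and $9$ and $14$ in $B_2$) and the pushoff has two transverse directions of freedom, this should follow from a routine general-position argument, but it is where the argument requires the most geometric care.
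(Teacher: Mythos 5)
Your reduction to $n=14$ via Theorem \ref{extendCycle} and Proposition \ref{addVertices} is fine, and your separating sphere is correctly chosen: the cycle $(1,3,5,7,9,11,13,8,10,12,14,2,4,6)$ meets a sphere separating $\{1,\ldots,7\}$ from $\{8,\ldots,14\}$ only in the edges $(7,9)$ and $(14,2)$. This is a legitimate reformulation of what the paper does by constructing two $8$-cycles that share the edge $(7,14)$ and deleting that edge. However, there is a genuine gap where you identify the summands. The connect-sum factor $K_1$ is $\alpha_1$ capped by an arc \emph{on the sphere} $S$ (equivalently, by a trivial arc pushed into the complementary ball $B_2$). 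You instead cap $\alpha_1$ by $\beta'$, a pushoff of the path formed by the half-edge of $(7,9)$, the edge $(2,7)$, and the half-edge of $(14,2)$ --- an arc lying in $B_1$, the \emph{same} ball that contains $\alpha_1$. An arc in $B_1$ joining the two puncture points is not automatically a valid cap: it can wind around or pass on the wrong side of $\alpha_1$, in which case $\alpha_1\cup\beta'$ is a trefoil but is not the summand $K_1$. You must show that $\beta'$ is isotopic rel endpoints, in the complement of $\alpha_1$, to an arc on $S$, and this is not a general-position triviality; it depends on how $(2,7)$ sits relative to the edges of $\alpha_1$ that it crosses.

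Concretely, $(2,7)$ crosses exactly the edges $(1,3)$ and $(1,6)$ of $\alpha_1$, and the argument succeeds only because both of those edges lie in the top sheet $S_1$ while $(2,7)$ lies in $S_2$, so the cap path stays entirely below them and can be swept out toward the sphere without piercing $\alpha_1$. Had the over/under pattern been mixed, your chosen cap would fail even though $\alpha_1\cup\beta'$ would still be a trefoil. This sheet-level verification is precisely the content of the paper's proof, which checks that the edge $(2,7)$ may be replaced by the path $(2,14,7)$ and that $(9,14)$ may be replaced by $(9,7,14)$ without changing knot type. The bigon contraction you single out as the main obstacle is in fact the easy part --- a sufficiently small pushoff always contracts; the missing step is the legitimacy of the cap arcs themselves, and supplying it essentially reproduces the paper's argument.
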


\begin{proof}
We can find a composite knot in $\widetilde{K}_{14}$ by first finding trefoils in two disjoint subgraphs.  The first subgraph is induced by vertices 1 through 7.  The second subgraph is induced by vertices 8 through 14.

Any set of seven vertices of $\widetilde{K}_{14}$ induces a graph that is ambient isotopic to the canonical book representation of $K_7$.  In $\widetilde{K}_7$ there is exactly one trefoil knot.  Therefore, there is exactly one trefoil in each subgraph of $\widetilde{K}_{14}$ induced by seven vertices. The first subgraph has a trefoil in the cycle $(1,3,5,7,2,4,6)$.  Notice that this cycle is ambient isotopic to the cycle $(1,3,5,7,14,2,4,6)$ in $\widetilde{K}_{14}$.  See Figure \ref{knot1}.

\begin{figure}
\centering
\includegraphics{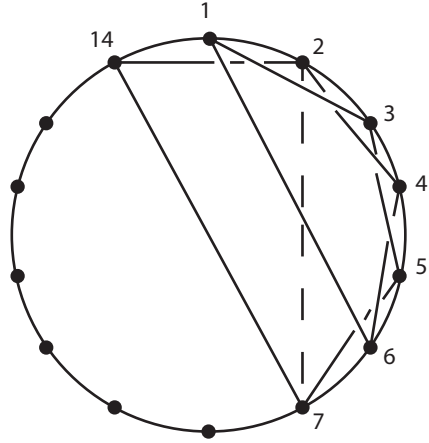}
\caption{Cycle (1,3,5,7,14,2,4,6) is ambient isotopic to cycle (1,3,5,7,2,4,6) since the edge (2,7), shown as a dashed line, can be replaced by the path (2,14,7).}
\label{knot1}
\end{figure}

These cycles are ambient isotopic because the only edges of the cycles which intersect the path (2,14,7) and the edge (2,7) are edges (1,3) and (1,6). Both of these edges lie in $S_1$ meaning that any path or edge that crosses those two edges will fall in a lower sheet. This means that the edge $(2,7)$ can be replaced with the path $(2,14,7)$ without changing the knot type.

The second subgraph (induced by vertices 8 through 14) has a trefoil in the cycle:
\[(8,10,12,14,9,11,13).\]
Notice that this cycle is ambient isotopic to the cycle:
\[(8,10,12,14,7,9,11,13).\]  See Figure \ref{knot2}.

\begin{figure}
\centering
\includegraphics{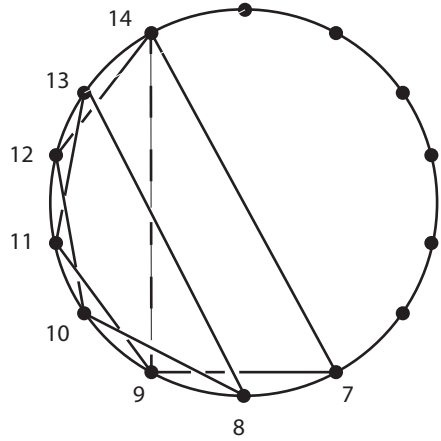}
\caption{Cycle (8,10,12,14,7,9,11,13) is ambient isotopic to cycle(8,10,12,14,9,11,13) since the edge (9,14), shown as a dashed line, can be replaced by the path (9,7,14).}
\label{knot2}
\end{figure}

These cycles are ambient isotopic because both the path (9,7,14) and the edge (9,14) cross edges (8,10) and (8,13), which are both in $S_1$. Therefore, any edge or path that crosses these two edges will still remain under them, meaning that the path $(9,7,14)$ can be replaced with the edge $(9,14)$ without affecting the knot type.

Place the two cycles on $\widetilde{K}_{14}$.  When these two cycles are layered they share the edge (7,14).  Removing edge (7,14)(which is the shared edge that has no crossings), will create the composite of the two trefoils. See Figure \ref{knot3}.  The cycle with the composite knot in $\widetilde{K}_{14}$ is:
\[(1,3,5,7,9,11,13,8,10,12,14,2,4,6).\]

\begin{figure}
\centering
\includegraphics{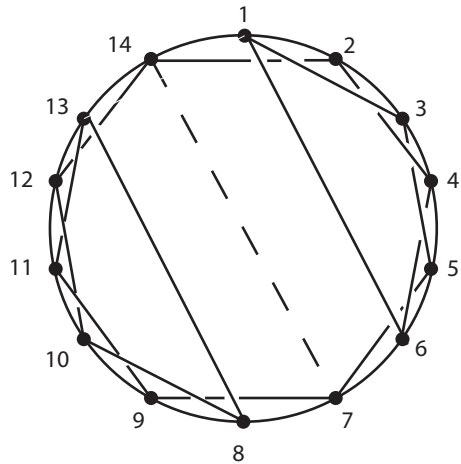}
\caption{Composite knot in $K_{14}$. The dashed line is the edge removed from both factor knots to form the composite.}
\label{knot3}
\end{figure}

For $n > 14$, the fact that the cycle
\[(1,3,5,7,9,11,13,8,10,12,14,15,16,...,n,2,4,6)\]
in the canonical book representation of $K_n$ is the composite of two trefoils follows immediately from Theorem 1.
\end{proof}

We can improve this result by finding a composite knot in $\widetilde{K}_{13}$. Consider two subgraphs of $\widetilde{K}_{13}$.  Let the first subgraph of $\widetilde{K}_{13}$ be induced by vertices 1 through 7, and let the second subgraph be induced by vertices 7 through 13. Refer to Figure \ref{knot4}.

Since each subgraph is ambient isotopic to $\widetilde{K}_7$, each subgraph contains exactly one trefoil knot. The first subgraph has a trefoil knot in the cycle:
\[(1,3,5,7,2,4,6).\]
The second subgraph has a trefoil in the cycle:
\[(7,9,11,13,8,10,12).\]

Place these two cycles together in $\widetilde{K}_{13}$. Notice that 4 edges meet at vertex 7.  Connect the knots by joining edges (5,7) and (7,9) and replacing the path (2,7,12) with the edge (2,12). This results in the cycle
\[(1,3,5,7,9,11,13,8,10,12,2,4,6).\]
Note that edge (2,12) crosses edges (1,3), (1,6), (8,13) and (11,13). Edge (2,12) is in sheet five, edges (1,3), (1,6), and (8,13) are in sheet one and lastly, edge (11,13) is in sheet four. This means that edge (2,12) crosses completely under all edges. Since edges (2,7) and (7,12) also cross under all the edges that edge (2,12) crosses, replacing the path (2,7,12) by the edge (2,12) forms a composite of the two trefoil knots in $\widetilde{K}_{13}$.

\begin{figure}
\centering
\includegraphics{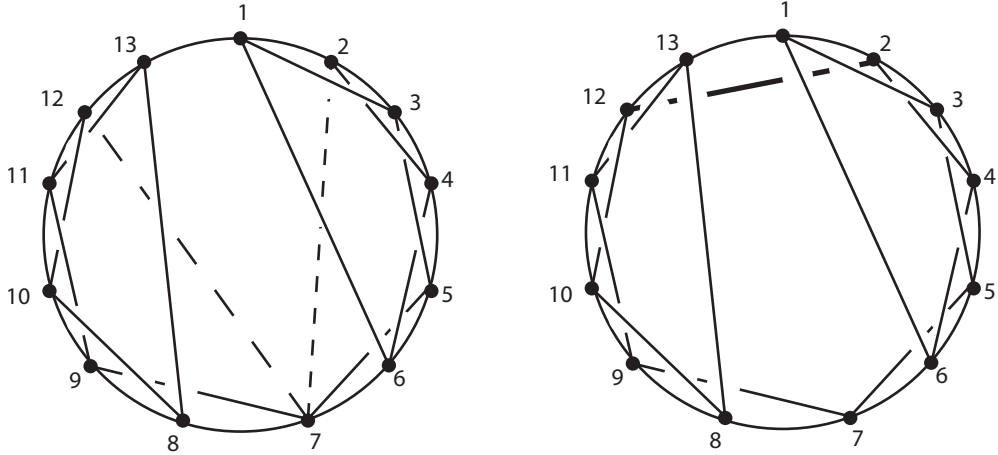}
\caption{Composite knot in $K_{13}$. On the left, the two cycles are layered with the dashed lines representing the edges to be removed. On the right is the composite formed by adding the bold edge (2,7).}
\label{knot4}
\end{figure}

The smallest $\widetilde{K}_n$ that a composite knot can be found in is $\widetilde{K}_{12}$; refer to Figure \ref{knot5}. To find this composite, once again we consider two subgraphs of $\widetilde{K}_{12}$ where the first subgraph is induced by the first 7 vertices and the second subgraph is induced by the last 7 vertices in the embedding of $K_{12}$.  

Each subgraph contains exactly one Hamiltonian cycle that is a trefoil knot. The first subgraph has a trefoil in the cycle:
\[(1,3,5,7,2,4,6).\]
The second subgraph has a trefoil in the cycle:
\[(6,8,10,12,7,9,11).\]

Place these two cycles with the trefoil knots on $K_{12}$. Notice that there are 4 edges that meet at vertex 6 and vertex 7.  Removing the paths (8,6,11) and (2,7,5) and adding edges (2,11) and (5,8) forms a composite knot. This cycle is:
\[(1,3,5,8,10,12,7,9,11,2,4,6).\]

\begin{figure}
\centering
\includegraphics{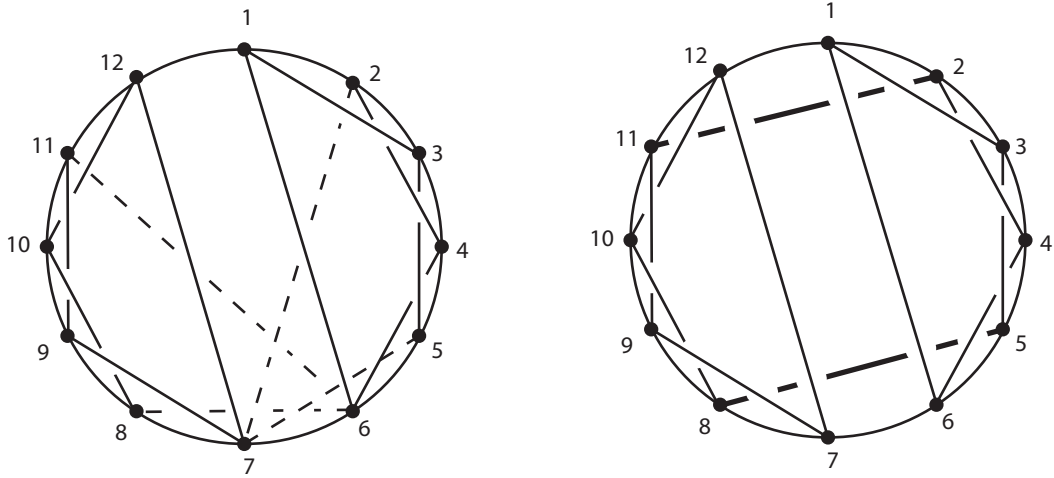}
\caption{Composite knot in $K_{12}$. On the left, two knotted trefoils are shown.  The dashed edges are the ones that will be replaced. On the right is a cycle which is the composite of the two trefoils.}
\label{knot5}
\end{figure}

Up to now we have shown how to find composites of trefoil knots.  A similar method can be used to find other composite knots.

\begin{theorem}
Let $\alpha$ be a Hamiltonian cycle in the canonical book representation of $K_p$ and let $\beta$ be a Hamiltonian cycle in the canonical book representation of $K_q$.  Then $\alpha \# \beta$ is a Hamiltonian cycle in the canonical book representation of $K_{p+q+1}$.
\end{theorem}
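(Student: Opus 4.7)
The plan is to mimic the explicit constructions given above for $\widetilde{K}_{12}$, $\widetilde{K}_{13}$, and $\widetilde{K}_{14}$: place $\alpha$ and $\beta$ on essentially disjoint halves of the spine of $\widetilde{K}_{p+q+1}$ and splice them into a single Hamiltonian cycle through the one remaining vertex.

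First I would embed $\alpha$ as a Hamiltonian cycle on the vertex set $A = \{1,2,\ldots,p\}$ and $\beta$ on $B = \{p+2, p+3, \ldots, p+q+1\}$, leaving $v = p+1$ as a bridging vertex. By Otsuki's sub-embedding property, the induced subgraphs on $A$ and $B$ are ambient isotopic to $\widetilde{K}_p$ and $\widetilde{K}_q$, so $\alpha$ and $\beta$ appear as disjoint cycles in $\widetilde{K}_{p+q+1}$ with their original knot types. Moreover, no chord among the $A$-labels can interleave with any chord among the $B$-labels in the projection, so $\alpha$ and $\beta$ are split: they can be separated by an embedded sphere in $\mathbb{R}^3$.

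Next I would choose an edge $e_\alpha = (a, p)$ of $\alpha$ incident to vertex $p$ and an edge $e_\beta = (p+2, b')$ of $\beta$ incident to vertex $p+2$ (both exist because $\alpha$ and $\beta$ are Hamiltonian). I would then form the Hamiltonian cycle $\gamma$ of $\widetilde{K}_{p+q+1}$ by deleting $e_\alpha$ and $e_\beta$, adding the path $(p, v, p+2)$, and adding the chord $(a, b')$. Since $a < p < p+2 < b'$, the new chord $(a, b')$ and the chord $(p, p+2)$ one would obtain by collapsing the path through $v$ are nested rather than interleaved, which is exactly the combinatorial signature of a connected-sum band.

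Finally, I would argue that $\gamma$ realizes $\alpha \# \beta$. The first step is routine: because $v$ has degree $2$ in $\gamma$ and both of its incident edges $(p, v)$ and $(v, p+2)$ join consecutive spine-labels, no other edge of $\gamma$ crosses the triangle $(p, v, p+2)$ in the projection, so the path $(p, v, p+2)$ can be replaced by the chord $(p, p+2)$ via an ambient isotopy. The harder step, and the main obstacle, is showing that the resulting quadrilateral with sides $e_\alpha$, $(p, p+2)$, $e_\beta$, and $(a, b')$ bounds an embedded disk disjoint from the rest of $\gamma$; this would imply that the splice is a connected-sum band rather than some more complicated band sum. I expect this to follow from the fact that $\alpha$ and $\beta$ are split: the two new chords $(p, p+2)$ and $(a, b')$ can be isotoped so that they meet a separating sphere in a single pair of arcs and thus form a trivial band realizing the connected sum. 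Verifying this directly in terms of sheet levels would use Lemmas~\ref{evenCriterion} and~\ref{oddCriterion}, much as in the explicit $\widetilde{K}_{12}$, $\widetilde{K}_{13}$, and $\widetilde{K}_{14}$ verifications carried out above.
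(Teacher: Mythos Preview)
Your overall strategy matches the paper's: place $\alpha$ and $\beta$ on disjoint blocks of consecutive spine labels (so they are split), route through the one leftover vertex, and splice. The crucial divergence is in \emph{where} you splice. You force the cut at the fixed vertices $p$ and $p+2$ and then must add a long chord $(a,b')$ whose sheet level you have no control over; the paper instead selects the edge $(\alpha_i,\alpha_{i+1})$ of $\alpha$ lying in the \emph{lowest} sheet and routes through the last two labels $p+q$ and $p+q+1$. That choice is the whole point: it guarantees the two new edges $(\alpha_i,p+q+1)$ and $(\alpha_{i+1},p+q)$ sit below every edge of $\alpha$, so the replacement is an honest isotopy, and then only a single stray crossing with $\tilde\beta$ remains, removable by flipping one factor.

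Your proposal explicitly flags the step you cannot complete --- showing the quadrilateral $(a,p,p+2,b')$ bounds a disk missing the rest of $\gamma$ --- and appeals to the split property. That appeal is not enough: a band sum of split knots is \emph{not} automatically the connected sum; the band must be trivial, and that is precisely what the sheet-level argument must establish. Concretely, with your choices the remaining $\alpha$-edge at $p$, say $(c,p)$, crosses $(a,b')$ whenever $c<a$ but does not cross $(a,p)$, so the quadrilateral is pierced in projection unless you at least pick $a$ to be the smaller neighbour of $p$ (and symmetrically for $b'$); even then you still owe a uniform over/under check against all of $\alpha$ and $\beta$. The paper's lowest-sheet trick is exactly the missing idea that makes this verification automatic rather than a case analysis you have not carried out.
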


\begin{proof}
Without loss of generality, we assume that $p \leq q$.  Consider the subgraph of $\widetilde{K}_{p+q+1}$ induced by vertices $1$ through $p$, and let $\alpha =(\alpha_1,\alpha_2,...,\alpha_p)$.  Because we are dealing with a book representation, we know that there exists some edge $(\alpha_i, \alpha_{i+1})$ that is in a lower sheet than all other edges in the cycle.  Change the orientation of the cycle if necessary so that $\alpha_i<\alpha_{i+1}$.  Edges $(\alpha_i, p+q+1)$ and $(\alpha_{i+1},p+q)$ are also in lower sheets than any of the edges of $\alpha$, so the cycle $\tilde{\alpha}=(\alpha_1, ..., \alpha_i, p+q+1, p+q, \alpha_{i+1}, \alpha_{i+2}, ..., \alpha_p)$ is ambient isotopic to $\alpha$.

Similarly, we can find a cycle $(\beta_1, \beta_2, ... \beta_q)$ that is ambient isotopic to $\beta$ using vertices $p+1$ through $p+q$.  We know such a cycle exists, because the subgraph induced by any $q$ vertices is ambient isotopic to the canonical book representation of $K_q$.  Suppose that $\beta_j = p+q$, and that the cycle is oriented such that $\beta_{j-1}<\beta{j+1}$.  Using the same argument used in the proof of Theorem \ref{extendCycle}, we can extend $\beta$ to an ambient isotopic cycle $\tilde{\beta}=(\beta_1, \beta_2, ... \beta_{j-1}, p+q+1, p+q, \beta_{j+1}, ..., \beta_q)$ that contains the edge $(p+q, p+q+1)$.

The cycles $\tilde{\alpha}$ and $\tilde{\beta}$ meet along the edge $(p+q,p+q+1)$.  The only crossing between disjoint edges in the two cycles is a single crossing between the edges $(\alpha_{i+1},p+q)$ and $(\beta_{j-1},p+q+1)$.  Since this crossing can be eliminated by flipping one of the components $\tilde{\alpha}$ or $\tilde{\beta}$, the cycle
\[ (\alpha_1, ... \alpha_i, p+q+1, \beta_{j-1}, \beta_{j-2}, ..., \beta_{1},\beta_{q},\beta_{q-1},...\beta_{j+1}, p+q, \alpha_{i+1}, ..., \alpha_p) \]
is ambient isotopic to the composite knot $\alpha \# \beta$.
\end{proof}

\section{Conclusion}
\label{Conclusion}
Using a computer program that identifies knots from their Dowker-Thistlethwaite code \cite{toth}, we obtain the following counts for knotted Hamiltonian cycles in the canonical book embedding of $K_n$ for $8\leq n \leq 11$:
\vspace{1 em}

\begin{tabular}{c|c|c}
$n$ & Knotted Hamiltonian cycles & Total number of knotted cycles \\
  \hline
  8 & 21 $3_1$ knots & 29 \\
  \hline
  9 & 342 $3_1$ knots & 577 \\
    & 9 $4_1$ knots & \\
    & 1 $5_1$ knot & \\
  \hline
  10 & 5090 $3_1$ knots & 9991 \\
     & 245 $4_1$ knots &  \\
     & 50 $5_1$ knots &   \\
     & 20 $5_2$ knots &   \\
     & 1 $8_{19}$ knot & \\
  \hline
  11 & 74855 $3_1$ knots & 165102 \\
     & 5335 $4_1$ knots &  \\
     & 1375 $5_1$ knots &   \\
     & 836 $5_2$ knots &   \\
     & 11 $6_1$ knots & \\
     & 11 $6_2$ knots & \\
     & 1 $7_1$ knot & \\
     & 56 $8_{19}$ knot & \\
     & 1 $10_{124}$ knot & \\
  \hline
\end{tabular}
\vspace{1 em}

The values in column 3 are a consequence of the following:

\begin{proposition}
Let $f(n)$ be the number of knotted Hamiltonian cycles in the canonical book representation of $K_n$.  Then the total number of knotted cycles in $\widetilde{K}_n$ is
\[ \sum_{j=7}^{n} \binom{n}{j} f(j) \]
\end{proposition}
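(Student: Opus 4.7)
The plan is to classify each knotted cycle in $\widetilde{K}_n$ according to the set of vertices it uses, and then count how many such cycles arise for each possible vertex set. A cycle in $\widetilde{K}_n$ of length $j$ uses exactly $j$ of the $n$ vertices, and conversely each $j$-element subset $S \subseteq \{1,2,\ldots,n\}$ can be the vertex set of many distinct $j$-cycles. Since any cycle of length at most $6$ bounds a disk in any embedding and is therefore unknotted, only cycles of length $j \geq 7$ can contribute, which explains the lower index of summation.

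First I would fix $j$ with $7 \leq j \leq n$ and a $j$-element subset $S$ of vertices. By Otsuki's result (cited earlier in the paper), the subgraph of $\widetilde{K}_n$ induced by $S$ is ambient isotopic to $\widetilde{K}_j$, via an isotopy that sends each $j$-cycle in the induced subgraph to the corresponding $j$-cycle in $\widetilde{K}_j$. In particular, a $j$-cycle in the induced subgraph is knotted in $\widetilde{K}_n$ if and only if the corresponding Hamiltonian cycle in $\widetilde{K}_j$ is knotted. By definition there are exactly $f(j)$ such Hamiltonian cycles, so each of the $\binom{n}{j}$ choices of $S$ contributes exactly $f(j)$ knotted $j$-cycles to $\widetilde{K}_n$.

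Next I would argue that these counts add without overcounting. Each knotted cycle in $\widetilde{K}_n$ has a uniquely determined length $j$ and a uniquely determined vertex set $S$ (namely, the vertices it passes through), so it is counted in exactly one term $\binom{n}{j} f(j)$ of the sum. Summing over $j$ from $7$ to $n$ therefore gives the total number of knotted cycles in $\widetilde{K}_n$.

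The only step requiring care is the appeal to Otsuki's subgraph property: one must verify that the ambient isotopy carrying the induced subgraph on $S$ to $\widetilde{K}_j$ really does preserve the knot type of each individual cycle inside that subgraph. This is immediate from the fact that ambient isotopy of the whole subgraph restricts to ambient isotopy of any subcomplex, so the knot type of any cycle in the induced subgraph matches the knot type of the same cycle viewed in $\widetilde{K}_j$. With that in hand, the identification of knotted cycles across the two embeddings is exact, and the counting argument closes.
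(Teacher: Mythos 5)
Your counting argument is exactly the paper's proof, which simply cites Otsuki's induced-subgraph property: partition the knotted cycles of $\widetilde{K}_n$ by their vertex set, note that the subgraph induced by any $j$-element set is ambient isotopic to $\widetilde{K}_j$ (and that an ambient isotopy of the subgraph carries each cycle to a cycle of the same knot type), so each such set contributes exactly $f(j)$ knotted $j$-cycles, and observe that a cycle determines its vertex set, so there is no overcounting. That part is correct and matches the paper.

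The one step that fails as written is your justification of the lower summation limit: it is not true that ``any cycle of length at most 6 bounds a disk in any embedding.'' In a spatial embedding the edges are arbitrary tame arcs, so even a $3$-cycle can be knotted in some embedding of $K_n$; and even with straight edges a hexagon can realize a trefoil, since the stick number of $3_1$ is $6$. What your argument actually requires is that $f(j)=0$ for $3\leq j\leq 6$, i.e.\ that the canonical book representations $\widetilde{K}_3,\dots,\widetilde{K}_6$ contain no knotted Hamiltonian cycles --- a fact about this particular embedding, not about all embeddings or all short cycles. This fact is true and is what the paper implicitly uses in starting the sum at $j=7$ (the smallest knotted cycle in the canonical book representation is the trefoil $7$-cycle in $\widetilde{K}_7$, consistent with Otsuki's results and with Conway and Gordon's minimal examples cited in the introduction; it can also be checked directly for $\widetilde{K}_6$). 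With that substitution for your unknottedness claim, the proof is complete and coincides with the paper's.
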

\begin{proof}
The proof follows immediately from Otsuki's result that any subset of vertices induces a subgraph that is ambient isotopic to the canonical book representation.
\end{proof}

In \cite{hirano}, Hirano proves that all spatial embeddings of $K_8$ must have at least 3 knotted Hamiltonian cycles; however, no known example achieves that bound.  In Proposition 18 of \cite{abrams}, Abrams and Mellor proved that the minimum number of knotted cycles in $K_8$ must be between 15 and 29.  We conjecture the following:

\begin{guess}The canonical book representation of $K_n$ contains the fewest total number of knotted cycles possible in any embedding of $K_n$.
\label{knottedCyclesConjecture}
\end{guess}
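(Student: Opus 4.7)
The conjecture asserts minimality over \emph{all} spatial embeddings of $K_n$, not just among book representations, and is strictly stronger than the counting formula of the preceding proposition. My plan is to reduce the statement to a per-cycle-length claim and then tackle that claim via refined Conway--Gordon-type arithmetic. Let $h_j(\Gamma)$ denote the number of knotted Hamiltonian cycles of a spatial embedding $\Gamma$ of $K_j$. Every knotted $j$-cycle of an embedding of $K_n$ is a Hamiltonian cycle of the $K_j$-subgraph induced on its vertex set, so the total knotted-cycle count of any embedding $\Gamma$ of $K_n$ equals $\sum_{j=7}^{n}\sum_{|V|=j} h_j(\Gamma|_V)$. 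By Otsuki's theorem each induced sub-embedding of $\widetilde{K}_n$ is itself a canonical book representation, so if one can establish the sub-claim that $\widetilde{K}_j$ achieves the minimum of $h_j$ over all $K_j$-embeddings for every $j \geq 7$, then summing term-by-term yields the conjecture.

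The heart of the plan is proving this Hamiltonian sub-claim. For $j = 7$ it is Conway and Gordon's result via $\sum_\gamma \operatorname{Arf}(\gamma) \equiv 1 \pmod 2$. To treat $j \geq 8$ I would look for an integer-valued cycle invariant of the embedding whose absolute value bounds the number of knotted Hamiltonian cycles from below and whose value on $\widetilde{K}_j$ agrees with $f(j)$. Natural candidates are sums over Hamiltonian cycles of the second Vassiliev coefficient $a_2$, or of Casson-type refinements such as those used by Taniyama--Yasuhara: such sums are finite-type invariants of the embedding, and because they satisfy a local crossing-change recurrence they should be tractable on $\widetilde{K}_j$ directly from the explicit sheet structure of Lemmas 1--2. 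If one can prove rigidity of the sum under edge crossing-changes, this forces $h_j \geq \bigl| \sum_\gamma a_2(\gamma) \bigr|$, and the hope is that this bound matches $f(j)$ exactly.

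For small $n$, one can combine the enumeration in Section 5 (which pins $f(j)$ from above) with a case analysis of local crossing changes around the canonical configuration to confirm the sub-claim directly; via the reduction above this would verify the full conjecture for $n \leq 11$. The chief obstacle in general is the gap between the best known lower bounds and the conjectured minimum: for $K_8$, Hirano's bound on $h_8$ is $3$, while $\widetilde{K}_8$ realizes $21$, and no currently available cycle-sum invariant is known to detect this factor-of-seven discrepancy. Closing it would require a genuinely new finite-type or quantum invariant of $K_j$-embeddings that is sensitive to the full combinatorial structure of each Hamiltonian cycle, and it is quite plausible that the conjecture must first be proved within a restricted class of embeddings (for instance among all book embeddings of $K_n$, using the crossing criteria of Lemmas 1--2) before the unrestricted version becomes accessible.
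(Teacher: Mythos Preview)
This statement is presented in the paper as an open \emph{conjecture}; the authors give no proof and remark only that it (together with Conjecture~2) holds for $n\le 7$, while for $n=8$ the best known lower bound on the total number of knotted cycles is $15$ (Abrams--Mellor) against the value $29$ attained by $\widetilde K_8$. There is thus no proof in the paper to compare against.

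What you have written is a research programme rather than a proof, as your own final paragraph concedes. The reduction in your first paragraph is correct and is precisely the paper's observation that Conjecture~2 implies Conjecture~1, so no new ground is gained there. The substantive gap lies in the ``heart of the plan'': you require an embedding invariant $I$ with $|I(\Gamma)|\le h_j(\Gamma)$ for every spatial embedding $\Gamma$ of $K_j$ and with $|I(\widetilde K_j)|=f(j)$, but you do not exhibit one. For the proposed candidate $\sum_\gamma a_2(\gamma)$ the inequality $\bigl|\sum_\gamma a_2(\gamma)\bigr|\le h_j$ is not valid in general, since a single knotted cycle can have $|a_2|>1$; nor do you establish that this sum is rigid under crossing changes for $j\ge 8$. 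Your further claim that $n\le 11$ can be settled by ``case analysis of local crossing changes around the canonical configuration'' is also unsupported: analysing a neighbourhood of one particular embedding cannot certify a minimum over \emph{all} spatial embeddings, and the $n=8$ case is in fact open in the literature you cite. Once the aspirational parts are removed, what remains is the reduction to Conjecture~2, which the paper already records.
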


\begin{guess}
The canonical book representation of $K_n$ contains the fewest number of knotted Hamiltonian cycles possible in any embedding of $K_n$.
\label{HamiltonianConjecture}
\end{guess}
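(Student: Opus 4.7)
The plan is to establish lower bounds on the number of knotted Hamiltonian cycles in any embedding of $K_n$ and then show that these bounds are achieved by $\widetilde{K}_n$. The natural starting point is the Conway--Gordon style invariant approach. Recall that Conway and Gordon showed that for any spatial embedding of $K_7$, the parity
\[
\sum_{\alpha} \mathrm{Arf}(\alpha) \pmod{2}
\]
summed over Hamiltonian $7$-cycles is an embedding-independent invariant equal to $1$, forcing at least one knotted Hamiltonian cycle and settling the conjecture for $n=7$. A first step would be to verify the conjecture computationally for each small $n$ where data is already available in the table of Section \ref{Conclusion}, by surveying enough embeddings of $K_n$ to build confidence that the listed counts are indeed minima and not merely the values attained by $\widetilde{K}_n$.

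For the inductive or algebraic step, I would search for integer-valued analogues of the Conway--Gordon invariant living on the set of Hamiltonian cycles of $K_n$. A candidate is a weighted sum
\[
F(\Gamma) = \sum_{\alpha \in \mathcal{H}(K_n)} w(\alpha)\, a_2(\alpha)
\]
where $a_2$ denotes the second coefficient of the Conway polynomial (which agrees mod $2$ with the Arf invariant) and $w$ is a combinatorial weight depending only on the cycle $\alpha$ viewed as a subgraph of the abstract $K_n$. If the weights $w$ can be chosen so that $F$ depends only on the isomorphism type of $K_n$, and if evaluating $F$ on $\widetilde{K}_n$ forces as many nonzero terms as the canonical book exhibits, then the matching lower bound would follow. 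Complementary tools would include triple linking numbers of disjoint triangles and degree-two Vassiliev-type invariants, both of which have been used successfully in small Conway--Gordon-style theorems for $K_6$ and $K_7$; one could also try to leverage Proposition \ref{addVertices} to reduce counts in $\widetilde{K}_n$ to counts in $\widetilde{K}_j$ for $j<n$ via the $\binom{n}{j}f(j)$ decomposition and then argue that any other embedding of $K_n$ contains at least as many knotted cycles at each length, by induction on the subgraphs.

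The hard part will be several-fold. First, Hamiltonian cycles behave poorly under vertex deletion, since removing a vertex converts a Hamiltonian cycle into a Hamiltonian path rather than a Hamiltonian cycle, so there is no direct induction on $n$ via induced subgraphs on Hamiltonian cycles (the decomposition works for \emph{all} knotted cycles, which is why Conjecture \ref{knottedCyclesConjecture} is more tractable than Conjecture \ref{HamiltonianConjecture}). Second, the rapid growth of the counts in the table, together with the proliferation of distinct knot types, suggests that no single mod-$2$ invariant like the Arf invariant can be sharp; a multi-invariant approach would be required, and there is no general theory guaranteeing that such integer-valued sums are ambient-isotopy invariants of the embedded graph. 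Third, even once an invariant with the correct value on $\widetilde{K}_n$ is constructed, proving that no competing embedding beats it requires a rigidity statement controlling the count of \emph{each} knot type, not just the knotted-versus-unknotted dichotomy. This final rigidity step is the central obstacle, and I expect a full proof would require genuinely new topological invariants of spatial graphs beyond those currently in the Conway--Gordon toolkit.
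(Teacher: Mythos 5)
You should be aware that the statement you are trying to prove is not a theorem in the paper at all: it is Conjecture \ref{HamiltonianConjecture}, and the authors offer no proof. They only note that it holds for $n\leq 7$ (via Conway and Gordon's $K_7$ result) and cite the state of the art for $n=8$, where even the number of knotted Hamiltonian cycles forced in an arbitrary embedding is unknown --- Hirano's lower bound of $3$ does not match the $21$ trefoil Hamiltonian cycles that $\widetilde{K}_8$ actually contains, and Abrams and Mellor's bounds for the total count ($15$ to $29$) likewise leave a gap. So there is no ``paper proof'' to compare yours against, and your proposal does not close the gap either: it is a research plan, not an argument. The weighted invariant $F(\Gamma)=\sum_\alpha w(\alpha)\,a_2(\alpha)$ is only a candidate --- you give no construction of weights $w$ for which $F$ is independent of the embedding, no computation of its value, and no mechanism by which a single numerical invariant could certify a lower bound as large as, say, $21$ for $n=8$ or $5090+245+50+20+1$ for $n=10$. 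Checking other embeddings computationally, as you suggest, can only produce evidence, never a minimality proof over the (infinite) set of all tame embeddings.

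To your credit, you correctly diagnose the structural obstructions: Hamiltonian cycles do not restrict to Hamiltonian cycles under vertex deletion, so the $\sum_j \binom{n}{j} f(j)$ decomposition and Proposition \ref{addVertices} help with Conjecture \ref{knottedCyclesConjecture} but give no induction for Conjecture \ref{HamiltonianConjecture}; and a single mod-$2$ Arf-type count cannot be sharp once multiple knot types with large $a_2$ (e.g.\ $8_{19}$, $10_{124}$) appear, since such cycles contribute more than $1$ to Conway--Gordon-style sums and so the sums only bound a weighted count, not the number of knotted cycles. But identifying the obstacles is where your proposal stops; the ``rigidity step'' you name as the central obstacle is precisely the missing proof. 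As written, the proposal establishes nothing beyond what the paper already claims, namely that the statement is plausible and verified for small $n$.
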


Note that Conjecture \ref{HamiltonianConjecture} implies Conjecture \ref{knottedCyclesConjecture}.  Both conjectures are true for $n\leq 7$.

\section{Acknowledgments}
The authors gratefully acknowledge funding for this project received from the Merrimack College Paul E. Murray Fellowship.  We also thank David Toth and Michael Walton for helpful conversations and for their assistance writing code to count the number of knotted Hamiltonian cycles in an embedding.

\bibliographystyle{plain}
\bibliography{sourcescited}

\begin{thebibliography}{10}

\bibitem{abrams}
Loren Abrams and Blake Mellor.
\newblock Counting links and knots in complete graphs.
\newblock arXiv:math.GT/1008.1085, August 2010.

\bibitem{adams}
Colin~C. Adams.
\newblock {\em The knot book}.
\newblock American Mathematical Society, Providence, RI, 2004.
\newblock An elementary introduction to the mathematical theory of knots,
  Revised reprint of the 1994 original.

\bibitem{Bernhart}
Frank Bernhart and Paul~C. Kainen.
\newblock The book thickness of a graph.
\newblock {\em J. Combin. Theory Ser. B}, 27(3):320--331, 1979.

\bibitem{BraidsRef_2}
Joan~S. Birman.
\newblock {\em Braids, links, and mapping class groups}.
\newblock Princeton University Press, Princeton, N.J., 1974.
\newblock Annals of Mathematics Studies, No. 82.

\bibitem{CG}
J.~H. Conway and C.~McA. Gordon.
\newblock Knots and links in spatial graphs.
\newblock {\em J. Graph Theory}, 7(4):445--453, 1983.

\bibitem{fleming}
Thomas Fleming and Blake Mellor.
\newblock Counting links in complete graphs.
\newblock {\em Osaka J. Math.}, 46(1):173--201, 2009.

\bibitem{hirano}
Yoshiyasu Hirano.
\newblock Improved lower bound for the number of knotted {H}amiltonian cycles
  in spatial embeddings of complete graphs.
\newblock {\em J. Knot Theory Ramifications}, 19(5):705--708, 2010.

\bibitem{Kobayashi}
Kazuaki Kobayashi.
\newblock Standard spatial graph.
\newblock {\em Hokkaido Math. J.}, 21(1):117--140, 1992.

\bibitem{otsuki}
Takashi Otsuki.
\newblock Knots and links in certain spatial complete graphs.
\newblock {\em J. Combin. Theory Ser. B}, 68(1):23--35, 1996.

\bibitem{toth}
David Toth and Michael Walton.
\newblock Personal communication, July 2010.

\end{thebibliography}

\end{document}